\documentclass[uplatex,a4paper,11pt]{amsart}
\usepackage{amsmath, amssymb, amsthm}
\usepackage{url}

\title[An Algebraic Proof of the Non-Modularity of Solutions to Canonical MDE]
  {An Algebraic Proof of the Non-Modularity of Solutions to the Canonical Modular Differential Equation}

\author{Yuichi Sakai}
\address{Kurume Institute of Technology 2228-66, Kamitsu, Kurume, Fukuoka 830-0052, JAPAN}
\email{dynamixaxs@gmail.com}
\author{Hiroyuki Tsutsumi}
\address{Osaka University of Health and Sport Science 1-1, Asashirodai, Kumatori-cho, Sennan-gun,
  Osaka 590-0496 JAPAN}
\email{tsutsumi@ouhs.ac.jp}

\subjclass[2020]{Primary 11F03, 11F11; Secondary 34M35, 33C05, 11F06.}

\keywords{Kaneko-Zagier equation, modular linear differential equation,
  hypergeometric function, monodromy representation, principal congruence subgroup.}

\newtheorem{theorem}{Theorem}[section]
\newtheorem*{theorem*}{Main Theorem}
\newtheorem{proposition}[theorem]{Proposition}
\newtheorem{corollary}[theorem]{Corollary}
\newtheorem{lemma}[theorem]{Lemma}

\theoremstyle{remark}
\newtheorem{remark}{Remark}

\begin{document}
\maketitle

\begin{abstract}
In this paper, we provide an algebraic proof of the condition for the existence of modular
form solutions to the Kaneko-Zagier differential equation of weight $k$. Unlike previous
representation-theoretic approaches relying on $SL_2(\mathbb{Z})$, our method employs the
connection matrices of the principal congruence subgroup $\Gamma(N)$. By deriving a condition
for simultaneous triangularizability from the commutators of the representation matrices
and applying Galois theory, we prove that the associated two-dimensional representation of
$\Gamma(N)$ is irreducible when the denominator $m$ of the fraction $(k+1)/6 = n/m$ satisfies
$m=1$ or $m \ge 7$. Consequently, we identify the weights that admit no modular form solutions
and obtain a complete classification of the dimensions of the spaces of such solutions.
\end{abstract}

\tableofcontents

\section{Introduction}

The Kaneko-Zagier (KZ) differential equation of weight $k$, introduced by Kaneko and Zagier
\begin{equation*}
  f''(\tau)-\frac{k+1}{6}E_2(\tau)f'(\tau)+\frac{k(k+1)}{12}E_2'(\tau) f(\tau)=0
\end{equation*}
is the most canonical example of a modular differential equation of weight $k$ with respect
to $SL_2(\mathbb{Z})$, which was discovered in connection with the lifting of supersingular
$j$-invariants and orthogonal polynomials. Kaneko and Koike \cite{KK} demonstrated by
explicit construction that the KZ equation admits modular form solutions when the weight $k$
takes certain integer and half-integer values. Based on numerical experiments, they further
conjectured that these solutions exhaust all modular form solutions. Furthermore, Kaneko
\cite{K} showed that when $k=(6n+1)/5 \ (n\not\equiv 4\pmod{5})$, the KZ equation admits a
basis for the solution space consisting of modular forms of weight $k$ on the congruence
subgroup $\Gamma(5)$.

Recently, Saber and Sebbar \cite{SS2} provided a global and representation-theoretic
explanation for why the solution space of the KZ equation possesses a basis of modular forms,
by examining the Schwarzian differential equation associated with the KZ equation. They
transformed the KZ equation into the normal form
\begin{equation*}
  y''+\pi^2\left(\frac{k+1}{6}\right)^2 E_4(\tau)y=0
\end{equation*}
and investigated the Schwarzian differential equation $\{h,\tau\}=2\pi^2(n/m)^2 E_{4}(\tau)$
satisfied by the ratio $h=y_1/y_2$ of two independent solutions, where $n$ and $m$ are
coprime positive integers such that $n/m=(k+1)/6$. Employing the theory of equivariant
functions and the classification of two-dimensional representations of $SL_2(\mathbb{Z})$
\cite{M}, they determined all parameters for which the ratio $h$ is a modular function
with respect to a subgroup of $SL_2(\mathbb{Z})$ of finite index.

Although Saber and Sebbar's method is highly powerful, it relies heavily on specific
group-theoretic and geometric properties of $SL_2(\mathbb{Z})$. Consequently, it is
difficult to apply their approach directly to completely determine the existence of modular
form solutions for an arbitrary subgroup, particularly the principal congruence subgroup
$\Gamma(N)$. The objective of this paper is to resolve this problem. To this end, we
employ a local and algebraic approach developed in our previous work \cite{ST}.

In \cite{ST}, we established an explicit formula for the representation matrices (connection
matrices) $N(r,s)$ associated with a specific family of elements in $\Gamma(N)$. Using this
formula, we determined the condition under which two such representation matrices $N(r,s)$
and $N(u,v)$ commute. This enabled us to provide an alternative local and algebraic proof of
Saber and Sebbar's result, albeit restricted to the case of $\Gamma(N)$. In the present
paper, we formulate the precise algebraic condition for $N(r,s)$ and $N(u,v)$ to be
simultaneously triangularizable as an equation involving real trigonometric functions.
Furthermore, by applying Galois theory, we prove that for $m=1$ or $m \ge 7$, these
matrices do not admit a non-trivial simultaneous triangularization; equivalently, they do
not share a common eigenvector.

This algebraic result implies that the representation $\rho|_{\Gamma(N)}$ associated with
the equivariant function $h$ is irreducible when $m=1$ or $m \ge 7$. Thus, in stark
contrast to Saber and Sebbar's geometric approach relying on the properties of
$SL_2(\mathbb{Z})$, we conclude---solely from the analysis of the representation matrices
and completely independent of their methods---that the KZ equation admits no modular form
solutions with respect to $\Gamma(N)$ for the corresponding weights.

This paper is organized as follows. Section \ref{PR} provides the necessary preliminaries,
including the explicit formula for the representation matrices $N(r,s)$ derived in
\cite{ST}. In Section \ref{PTC}, we compute the commutator of two representation matrices
$N(r,s)$ and $N(u,v)$ to derive a necessary and sufficient condition for their simultaneous
triangularizability. In Section \ref{PTCAC}, we reduce this condition to an equation
involving perfect squares of real-valued functions and classify its solutions using Galois
theory. Finally, in Section \ref{KZMSC}, we synthesize these algebraic results to prove our
main theorem---a complete classification of the dimensions of the spaces of modular form
solutions to the KZ equation---as well as a corollary concerning the irreducibility of the
representation $\rho|_{\Gamma(N)}$.

\section{Preliminaries}\label{PR}

In this section, we briefly recall the notation and key results from our previous work
\cite{ST} that will be used throughout this paper.

Let $k \not\equiv 0, 4, 5, 11 \pmod{12}$ be a rational number. We define the
parameters $\theta$ and $C$ as $\theta = \frac{1+k}{6}\pi$ and $C = \csc\theta$,
respectively. For an integer $l$, we set $Z_l = e^{2l i\theta}$ and $W_l = (Z_l - 1)C$.
Note that by definition, $C \neq 0, 2$ and $1/C \neq 0$. We also define the constants
$G_{12}$ and $G_{21}$ using the gamma function $\Gamma$ as follows:
\begin{equation*}
  G_{12} = \frac{\sqrt{\pi} \Gamma\left(\frac{1+k}{6}\right)}
  {4 \Gamma\left(\frac{2+k}{12}\right) \Gamma\left(\frac{6+k}{12}\right)},\qquad
  G_{21} = \frac{\sqrt{\pi} \Gamma\left(\frac{5-k}{6}\right)}
  {2 \Gamma\left(\frac{6-k}{12}\right) \Gamma\left(\frac{10-k}{12}\right)}.
\end{equation*}

The KZ equation of weight $k$ is a second-order Fuchsian differential equation defined on
the complex upper half-plane $\mathbb{H}$, introduced by Kaneko and Zagier \cite{KZ}:
\begin{equation*}
  f''(\tau)-\frac{k+1}{6}E_2(\tau)f'(\tau)+\frac{k(k+1)}{12}E_2'(\tau) f(\tau)=0,
\end{equation*}
where $E_k(\tau)$ denotes the normalized Eisenstein series of weight $k$. To describe the
fundamental system of solutions near the cusp $i\infty$, let $F(a, b, c; x)$ be the
Gaussian hypergeometric series. For $a=-k/12$, $b=-(k-4)/12$, and $c=-(k-5)/6$, we
define
\begin{align*}
  y_1(x) &= F(a, b, c; x), \\
  y_3(x) &= F(a, b, a+b-c+1; 1-x).
\end{align*}
By substituting $x=1728/j(\tau)$, where $j(\tau)$ is the elliptic modular function, we
obtain functions $y_i(\tau)$ on $\mathbb{H}$ for $i=1, 3$.

\begin{theorem}[Theo. 2.4 and Prop. 3.3 of \cite{ST}]
  For $k\in\mathbb{Q}$ with $k \not\equiv 0, 4, 5, 11 \pmod{12}$, the pair 
  $E_4(\tau)^{k/4}y_1(\tau)$ and $E_4(\tau)^{k/4}y_3(\tau)$ forms a fundamental system of 
  solutions to the KZ equation of weight $k$ near $i\infty$.
\end{theorem}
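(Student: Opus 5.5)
The plan is the classical reduction of the KZ equation to the Gauss hypergeometric equation in the variable $x=1728/j(\tau)$, followed by a check that the two chosen hypergeometric solutions are independent.

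\emph{Step 1 (reduction).} Substitute $f(\tau)=E_4(\tau)^{k/4}\,y(x)$ with $x=1728/j$ into the KZ equation. I will use the standard identities
\[
\frac{1}{2\pi i}\frac{dj}{d\tau}=-\frac{E_6}{E_4}\,j,\qquad
\frac{1}{2\pi i}E_4'=\frac{E_2E_4-E_6}{3},\qquad
\frac{1}{2\pi i}E_6'=\frac{E_2E_6-E_4^2}{2},
\]
\[
\frac{1}{2\pi i}E_2'=\frac{E_2^2-E_4}{12},\qquad
1-x=\frac{E_6^2}{E_4^3}.
\]
Writing $D=\frac{1}{2\pi i}\frac{d}{d\tau}$, one gets $D x=x\,E_6/E_4$. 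Expanding $D^2 f$ and $Df$ in terms of $y_x$ and $y_{xx}$, all terms involving $E_2$ should cancel; this cancellation is precisely why the coefficient $\frac{k+1}{6}$ in front of $E_2 f'$ is matched with $\frac{k(k+1)}{12}$ in front of $E_2' f$. After multiplying by the appropriate power of $E_4$ and using $E_6^2=E_4^3(1-x)$, the equation becomes
\[
x(1-x)y''+\bigl(c-(a+b+1)x\bigr)y'-ab\,y=0 .
\]
Matching coefficients should give $a=-k/12$, $b=-(k-4)/12$, $c=-(k-5)/6$. For instance $ab=k(k-4)/144$ and $a+b+1=(8-k)/6$. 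This coefficient bookkeeping is the main computational obstacle, but it is routine, and I would organise it by first recording $D^2x$ in terms of $E_2,E_4,E_6$.

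\emph{Step 2 (local solutions).} By classical theory, $F(a,b,c;x)$ solves this equation near $x=0$ provided $c\notin\mathbb{Z}_{\le 0}$. Likewise $F(a,b,a+b-c+1;1-x)$ solves it near $x=1$ provided $a+b-c+1\notin\mathbb{Z}_{\le 0}$. Here $a+b-c+1=1/2$, which is always admissible. The condition $c=(5-k)/6\notin\mathbb{Z}_{\le0}$ means $k\not\equiv 5\pmod 6$, i.e.\ $k\not\equiv5,11\pmod{12}$, and this is excluded by hypothesis. Near $i\infty$ we have $x\to0$, and the image of a neighbourhood of $i\infty$ is a punctured disc around $0$. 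I will take $y_3$ there by analytic continuation along a path inside $|x|<1$ and $|1-x|<1$, for example on the real segment $0<x<1$ corresponding to $\tau$ on the imaginary axis.

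\emph{Step 3 (independence).} The exponents at $x=1$ are $0$ and $c-a-b=1/2$. Hence $y_3$ is holomorphic at $x=1$, while the other local solution there carries a factor $(1-x)^{1/2}$. At $x=0$ the exponents are $0$ and $1-c=(k+1)/6$. Kummer's connection formula gives
\[
y_3=A\,y_1+B\,x^{1-c}F(a-c+1,b-c+1,2-c;x),
\]
and $y_1,y_3$ are independent if and only if $B\ne0$. Here
\[
B=\frac{\Gamma(1/2)\Gamma(c-1)}{\Gamma(a)\Gamma(b)},
\]
which is essentially $G_{12}$ up to the normalisation of the paper. This $B$ is finite and nonzero when $c-1\notin\mathbb{Z}_{\le0}$ and $a,b\notin\mathbb{Z}_{\le0}$. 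The conditions $a=-k/12\notin\mathbb{Z}_{\le0}$ and $b\notin\mathbb{Z}_{\le0}$ amount to $k\not\equiv0,4\pmod{12}$, which are the remaining excluded classes.

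The case $1-c\in\mathbb{Z}$ needs separate attention, because the second solution at $0$ may then contain a logarithm. I would handle it by using the Wronskian of $y_1$ and $y_3$ instead. By Abel's formula this Wronskian equals a constant times $x^{-c}(1-x)^{c-a-b-1}$. The constant is obtained by letting $x\to1$, and it is a Gamma ratio that is again nonzero exactly under the stated congruence conditions.

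Finally, multiplication by $E_4^{k/4}$, which is nonvanishing near $i\infty$, preserves independence. This yields the claimed fundamental system of solutions.
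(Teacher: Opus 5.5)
Your argument is correct and follows the standard route: the Kaneko--Zagier substitution $f=E_4^{k/4}y(1728/j)$ (your parameters $a,b,c$ check out), then Kummer's connection formula, with independence equivalent to $B=\sqrt{\pi}\,\Gamma(c-1)/(\Gamma(a)\Gamma(b))\neq 0$. The paper does not reprove this statement; it cites \cite{ST}, and its constants $G_{12},G_{21}$ are exactly Kummer connection coefficients, so this is presumably the same approach.

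There are two harmless slips:
\begin{enumerate}
\item $B$ is not $G_{12}$. In fact $G_{12}=A/4$, and it vanishes for some admissible weights, e.g.\ $k=-2$. There $y_3=x^{-1/6}$ and the pair is still independent.
\item The logarithmic case needs no separate treatment. Indeed $1-c=(k+1)/6\in\mathbb{Z}$ exactly when $k\equiv 5\pmod 6$, which the hypothesis excludes.
\end{enumerate}
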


Since the KZ equation is a Fuchsian differential equation whose solution space possesses
modularity of weight $k$, analyzing the analytic continuation of its solutions over
$\mathbb{H}$ and determining when these solutions are modular forms are of primary interest.
For this purpose, we recall the explicit representation matrices determining the connection 
of the solutions, as well as a non-commutativity property.

For natural numbers $r, s$ and the usual generators $T$ and $S$ of $SL_2(\mathbb{Z})$,
consider the transformation associated with the matrix $L(r, s)=-(T^rST^sS)^2 \in \Gamma(rs-2)$.

\begin{theorem}[Theo. 4.4 of \cite{ST}]\label{THEXFN}
  For $k\in\mathbb{Q}$ with $k \not\equiv 0, 4, 5, 11 \pmod{12}$, let $r \neq 1$ and
  $s \neq 1$ be natural numbers. Under the transformation by $L(r, s)$, the basis
  functions $y_1(\tau)$ and $y_3(\tau)$ transform as follows:
  \begin{equation*}
    (y_1(L(r, s)\tau), iy_3(L(r, s)\tau))=(y_1(\tau), iy_3(\tau))N(r, s).
  \end{equation*}  
  Here, the representation matrix $N(r, s) = \rho((T^r S T^s S)^2)$ is explicitly
  given by
  \begin{equation*}
    N(r, s) = -Z_r Z_s E + \frac{K_{r,s}}{2}
    \begin{pmatrix}
      n_{11} & n_{12} \\
      n_{21} & n_{22}
    \end{pmatrix},
  \end{equation*}
  where $K_{r,s} = W_r W_s + 4(Z_r + Z_s)$ and the matrix entries are defined as
  \begin{align*}
    n_{11} &= \frac{1}{2} + \frac{1}{8} \big( 2(Z_s - 1) + W_s \big)(2 + W_r), \\
    n_{12} &= i G_{12} \big( 2(Z_s - Z_r) + (Z_r - 1)W_s \big), \\
    n_{21} &= i G_{21} Z_r W_s, \\
    n_{22} &= \frac{1}{4} Z_r(2 - W_s).
  \end{align*}
\end{theorem}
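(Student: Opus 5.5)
This theorem is quoted from \cite{ST}, so the plan is to reconstruct its proof from the monodromy of the hypergeometric equation. We compute $\rho(T)$ and $\rho(S)$ in the basis $(y_1, iy_3)$ and then multiply out $(T^rST^sS)^2$. The basic observation is that $x=1728/j(\tau)$ is $SL_2(\mathbb{Z})$-invariant. Hence $y_1(\gamma\tau)$ and $y_3(\gamma\tau)$ are again solutions of the hypergeometric equation in $x$, obtained by analytic continuation along the image in the $x$-sphere of a path from $\tau$ to $\gamma\tau$. The factor $E_4^{k/4}$ then accounts for the automorphy factor and the sign $-1$ in $L(r,s)$.

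\textbf{Step 1: the generator $T$.} The map $T$ corresponds to a loop around $x=0$ (the cusp). Since $y_1$ is holomorphic at $x=0$ with $x\sim 1728q$, continuation multiplies $y_1$ by a phase determined by $E_4^{k/4}$ and the $q$-expansion. The function $y_3$ is written through the $x=0$ connection formula as a combination of $y_1$ and $x^{1-c}F(\dots;x)$; the second term picks up $e^{2\pi i(1-c)}$. With $1-c=(k+1)/6$, this phase is exactly $Z_1=e^{2i\theta}$. Consequently $\rho(T)$ is upper triangular in $(y_1,iy_3)$, with eigenvalues $1$ and $Z_1$ (up to the overall normalisation), and $\rho(T)^r$ involves $Z_r$ and $(Z_r-1)$.

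\textbf{Step 2: the generator $S$.} The map $S$ fixes $\tau=i$, which corresponds to $x=1$, and the local exponent difference there is $c-a-b=1/2$. So $\rho(S)$ is the half-monodromy at $x=1$. It acts diagonally on the local basis at $x=1$, namely $y_3$ and $(1-x)^{1/2}F(\dots;1-x)$, with eigenvalues $\pm$ (times a normalising constant). To express $\rho(S)$ in the basis $(y_1,iy_3)$ we use Gauss's connection formula
\[
F(a,b,c;x)=A\,F(a,b,a+b-c+1;1-x)+B\,(1-x)^{c-a-b}F(c-a,c-b,c-a-b+1;1-x),
\]
whose coefficients are gamma quotients. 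After applying the reflection and duplication formulas, the off-diagonal entries become $G_{12}$ and $G_{21}$, and the diagonal entries become $\csc\theta=C$ times trigonometric expressions. The factor $i$ in $iy_3$ is chosen precisely to make these entries real or imaginary in a uniform way.

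\textbf{Step 3: multiply out the word.} We form $\rho(T)^r\rho(S)\rho(T)^s\rho(S)$ and square it. Using $\rho(S)^2=\pm E$ and the relation $G_{12}G_{21}=$ (an explicit trigonometric expression in $C$, obtained from $\Gamma(z)\Gamma(1-z)=\pi/\sin\pi z$), every product $G_{12}G_{21}$ can be eliminated. This leaves entries that are polynomials in $Z_r$, $Z_s$ and $C$, with only single factors $G_{12}$ or $G_{21}$ off the diagonal. We then apply Cayley--Hamilton to the $2\times2$ matrix $M=\rho(T^rST^sS)$, which has determinant $Z_rZ_s$ up to sign, to get
\[
M^2=(\operatorname{tr}M)\,M-\det M\,E.
\]
This explains the shape $-Z_rZ_sE+\tfrac{K_{r,s}}{2}(n_{ij})$, with $\tfrac{K_{r,s}}{2}$ essentially $\operatorname{tr}M$ up to normalisation. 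Rewriting $\operatorname{tr}M$ with $W_l=(Z_l-1)C$ should yield $K_{r,s}=W_rW_s+4(Z_r+Z_s)$. The entries of $M$ then give the $n_{ij}$.

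\textbf{Main obstacle.} The hardest part will be the bookkeeping of branches: the phases of $E_4^{k/4}$, the choice of branch of $(1-x)^{1/2}$ along $S$, and the sign $-1$ in $L(r,s)$. These must be tracked so that the normalisation is exactly $N(r,s)=\rho((T^rST^sS)^2)$, and so that the constants come out exactly as $\tfrac12,\tfrac18,\tfrac14$ with $C$ hidden in $W$. I would first fix everything in the case $r=s=2$ and check it numerically, where the relevant element lies in $\Gamma(2)$, and only then do the symbolic computation for general $r,s$. The hypothesis $r,s\neq1$ presumably ensures that $L(r,s)$ is a genuine nontrivial element, so that the continuation path avoids degenerate elliptic-point issues.
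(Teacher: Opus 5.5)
\paragraph{There is no in-paper proof to compare with, but your route works.} The paper does not prove this statement; it is quoted from \cite{ST}. The only hint of method, in the remark following the proof of Theorem~\ref{NRSNUVTRIG2}, is that the matrices come from rewriting the KZ equation as Gauss's equation, which is your route. Your plan goes through, and it is shorter than you fear once the generators are written down.

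The connection formulas give
\[
y_3=4G_{12}\,y_1+(\ast)\,x^{1-c}F(a-c+1,b-c+1,2-c;x),\qquad
y_1=2G_{21}\,y_3+(\ast)\,(1-x)^{1/2}F(c-a,c-b,\tfrac32;1-x).
\]
The coefficients $4G_{12}$ and $2G_{21}$ are literally Gauss's gamma quotients, because $a+b-c+1=c-a-b=\tfrac12$. Two further facts are needed:
\begin{itemize}
\item $y_1(\tau+1)=y_1(\tau)$;
\item $(1-x)^{1/2}$ is holomorphic and odd under $S$ at $\tau=i$, since $1-x$ has a double zero there.
\end{itemize}
With these, for the row vector $Y=(y_1,iy_3)$ one gets
\[
\rho(T)=\begin{pmatrix}1&4iG_{12}(1-Z_1)\\0&Z_1\end{pmatrix},\qquad
\rho(S)=\begin{pmatrix}-1&0\\-4iG_{21}&1\end{pmatrix}.
\]
Using $G_{12}G_{21}=(C+2)/32$, a direct multiplication gives $\rho(T)^r\rho(S)\rho(T)^s\rho(S)=2\,(n_{ij})$ exactly. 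This matrix has trace $K_{r,s}/4$ and determinant $Z_rZ_s$, so Cayley--Hamilton yields $-Z_rZ_sE+\tfrac{K_{r,s}}{2}(n_{ij})$.

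\paragraph{Corrections to your sketch.}
\begin{itemize}
\item There is no phase in $\rho(T)$. Neither $E_4^{k/4}$ nor the sign in $L(r,s)$ plays any role: $y_1,y_3$ depend on $\tau$ only through $x$, and $-E$ acts trivially on $\mathbb{H}$.
\item $\rho(S)$ contains only $G_{21}$ and has diagonal $(-1,1)$; $G_{12}$ enters through $\rho(T)$.
\item $C$ appears only when $G_{12}G_{21}$ is eliminated, so no duplication formula is needed.
\item The hypothesis $r,s\neq1$ is never used, because the computation is an identity in $r,s$.
\item The branch bookkeeping reduces to a single scalar. At points of $SL_2(\mathbb{Z})e^{2\pi i/3}$ the local exponents of $y_1,y_3$ in $\tau$ are $-k/4$ and $1-k/4$, so the monodromy there is the scalar $e^{-\pi ik/2}$. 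Taking the horizontal path near $i\infty$ for $T$ and the imaginary axis through $i$ for $S$ fixes it.
\end{itemize}

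\paragraph{The order of the product.} This is the one bookkeeping point you did not flag. Under the row-vector law $Y(\gamma\tau)=Y(\tau)\rho(\gamma)$ as written, with the usual action of $T$ and $S$, one has $Y(\gamma_1\gamma_2\tau)=Y(\tau)\rho(\gamma_2)\rho(\gamma_1)$. So literally
\[
Y(T^rST^sS\,\tau)=Y(\tau)\rho(S)\rho(T)^s\rho(S)\rho(T)^r ,
\]
which is the conjugate of your product by $\rho(T)^r$. For instance, its $(2,1)$ entry is $2iG_{21}W_s$ rather than $2iG_{21}Z_rW_s$.

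The displayed $N(r,s)$ is what your order produces. You should therefore adopt the statement's implicit convention that $\rho$ is multiplicative. Equivalently, read $N(r,s)$ as the matrix of $-(ST^sST^r)^2=T^{-r}L(r,s)T^{r}$. That element also lies in $\Gamma(rs-2)$, so nothing downstream in the paper is affected. Without fixing this, your proposed numerical check at $r=s=2$ would report a mismatch.
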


Furthermore, we recall the following result concerning the non-commutativity of these
representation matrices.

\begin{theorem}[Theo. 6.7 of \cite{ST}]\label{LRSLUVIGNANDNCNRSNUV}
  Assume $k \in \mathbb{Q} \setminus \mathbb{Z}$ such that $k \not\equiv 1/2, 7/2 \pmod{6}$. 
  Furthermore, if $k$ is of the form $k=m/5$ for some $m \in \mathbb{Z}$, assume $m \not\equiv 1 \pmod 6$.
  Then there exist distinct pairs of integers $(r, s)$ and $(u, v)$ such that 
  $L(r, s), L(u, v)\in\Gamma(N)$ and the matrices $N(r, s)$ and $N(u, v)$ do not commute.
\end{theorem}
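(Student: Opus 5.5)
Theorem~\ref{LRSLUVIGNANDNCNRSNUV} is recalled from \cite{ST}, so the argument I would give follows the structure of that paper, using only the explicit formula of Theorem~\ref{THEXFN}. The plan has three parts: compute the commutator, reduce non-commutation to a trigonometric inequality, and choose pairs $(r,s),(u,v)$ with $L(r,s),L(u,v)\in\Gamma(N)$ for which that inequality holds.

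\textbf{Step 1: the commutator.} The scalar part $-Z_rZ_sE$ drops out, so
\[
[N(r,s),N(u,v)]=\tfrac{K_{r,s}K_{u,v}}{4}\,[\,n(r,s),n(u,v)\,],
\]
where $n(r,s)=(n_{ij})$. Write $n=\begin{pmatrix}a&b\\c&d\end{pmatrix}$ and $n'=\begin{pmatrix}a'&b'\\c'&d'\end{pmatrix}$. Two $2\times2$ matrices commute iff the vectors $(a-d,b,c)$ and $(a'-d',b',c')$ are proportional. It is therefore enough to show that one $2\times2$ minor is nonzero. I would take
\[
bc'-b'c=-G_{12}G_{21}\big[(2(Z_s-Z_r)+(Z_r-1)W_s)Z_uW_v-(2(Z_v-Z_u)+(Z_u-1)W_v)Z_rW_s\big].
\]
Both $G_{12}$ and $G_{21}$ are nonzero under the hypotheses, since their gamma arguments avoid the poles. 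Hence non-commutation follows from
\[
K_{r,s}K_{u,v}\neq0
\quad\text{and}\quad
\Phi:=(2(Z_s-Z_r)+(Z_r-1)W_s)Z_uW_v-(2(Z_v-Z_u)+(Z_u-1)W_v)Z_rW_s\neq0.
\]

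\textbf{Step 2: trigonometric reduction.} Use $Z_l-1=2ie^{il\theta}\sin l\theta$ and $W_l=2ie^{il\theta}\sin(l\theta)/\sin\theta$. Then $\Phi$, after removing a unimodular factor, becomes a real trigonometric polynomial in $\theta$ built from $\sin r\theta,\sin s\theta,\sin u\theta,\sin v\theta$. For instance, $K_{r,s}$ reduces to a multiple of
\[
\cos((r-s)\theta)\sin^2\theta-\sin r\theta\sin s\theta
\]
or a similar expression. The key observation I would make is that symmetric choices such as $(u,v)=(s,r)$ make $\Phi$ collapse to a single product of sines. I would prefer that collapse over working with the general polynomial.

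\textbf{Step 3: choice of pairs and the main obstacle.} $L(r,s)\in\Gamma(rs-2)$, so I would pick $r,s,u,v$ with $N\mid rs-2$ and $N\mid uv-2$. Natural choices are $(r,s)=(2,N+1)$ with $(u,v)=(N+1,2)$, or $(r,s)=(2,2N+2)$, possibly with extra multiples of $N$ in reserve. Here $\theta=\pi n/m$ is a rational multiple of $\pi$, so every factor $\sin l\theta$ vanishes exactly when $m\mid l$ (up to the factor $2$ from $2l\theta$). Nonvanishing of $\Phi$ and of the $K$'s thus becomes a divisibility condition on $r,s,u,v$ relative to $m$, and we are free to shift $r,s$ by multiples of $N$ to avoid bad residues.

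The main obstacle is the residual exceptional cases. These are small $m$, where every admissible choice forces some sine to vanish or forces an accidental cancellation among the terms of $K$ or $\Phi$. The excluded weights $k\equiv1/2,7/2\pmod 6$ and $k=m/5$ with $m\equiv1\pmod6$ are exactly where this happens. For those $m$ (the denominators $2,3,4,5,6$) I would check directly, via the explicit values of $e^{i\theta}$, that the required expressions vanish only in the excluded cases, and produce explicit pairs otherwise.
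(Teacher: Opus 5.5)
The paper gives no proof of this statement (it is imported from \cite{ST}), so the proposal is judged on its own terms. Your Step 1 is sound. Your $bc'-b'c=-G_{12}G_{21}\Phi$ is exactly the entry $x$ of Proposition~\ref{NRSNUVCM}, and $G_{12}G_{21}=(C+2)/32\neq0$ for $k\notin\mathbb{Z}$. So $K_{r,s}K_{u,v}\Phi\neq0$ does force non-commutation. There are two minor slips. First, $K_{r,s}=\frac{4e^{i(r+s)\theta}}{\sin^2\theta}\bigl(2\sin^2\theta\cos((r-s)\theta)-\sin r\theta\,\sin s\theta\bigr)$. Second, $\Phi$ is not real up to a unimodular factor: for $(u,v)=(s,r)$ it equals $-8e^{2i(r+s)\theta}\sin((s-r)\theta)\bigl(\frac{\sin((r+s)\theta)}{\sin\theta}-i\,\frac{(1-2\sin\theta)\sin r\theta\,\sin s\theta}{\sin^2\theta}\bigr)$.

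\textbf{The genuine gap is Step 3.} Since $Z_{l+m}=Z_l$, the matrix $N(r,s)$ depends only on $(r,s)\bmod m$. Since $\Gamma(mN)\subseteq\Gamma(N)$, the case that must be handled, and that suffices, is $m\mid N$. There, shifting $r,s$ by multiples of $N$ changes nothing, so the freedom you rely on vanishes exactly where it is needed, and your pairs fail. One computes
$K_{2,s}=-\frac{8e^{i(s+2)\theta}}{\sin\theta}\cos(2\theta)\sin((s-1)\theta)$, so $N(2,N+1)$ and $N(N+1,2)$ are scalar whenever $m\mid N$. Also $(2,2N+2)$ is not admissible, since $2(2N+2)-2=4N+2$. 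Nor is $K_{r,s}\neq0$ a divisibility condition in general. It is the algebraic condition $2\sin^2\theta\cos((r-s)\theta)\neq\sin r\theta\,\sin s\theta$. For example, $K_{3,3}\neq0$ needs a Kronecker-type argument, because $K_{3,3}=0$ would force $2\cos2\theta=-1\pm\sqrt2$.

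The symmetric trick cannot repair this uniformly. Take $m=7$ and $7\mid N$. Every admissible $(r,s)$ then satisfies $rs\equiv2\pmod 7$, so its residue is one of $(1,2),(2,1),(5,6),(6,5),(3,3),(4,4)$. The first four give $K=0$, because the bracket in $K_{r,s}$ is even under $(r,s)\mapsto(-r,-s)$. For the last two, $(s,r)$ gives the same matrix. A working choice is $N(3,3)$ against $N(4,4)$, with lifts $r\equiv3$ or $4\pmod 7$ a unit mod $N$ and $s\equiv2r^{-1}\pmod N$. Its $x$-entry is a nonzero multiple of $\sin\theta\,\sin3\theta\,\sin4\theta$. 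This is a non-symmetric pair that your plan does not anticipate.

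Finally, your exceptional-case analysis is aimed at the wrong place. The hypotheses are equivalent to $m\ge7$:
\begin{itemize}
\item $m\mid6$ gives $k\in\mathbb{Z}$;
\item $m=4$ gives $k\equiv1/2,7/2\pmod 6$;
\item $m=5$ gives $k=(6n-5)/5$, whose numerator is $\equiv1\pmod 6$.
\end{itemize}
So nothing is left to check for denominators $2,\dots,6$. What is missing is an argument valid for every $m\ge7$ and every $N$ with $m\mid N$. It must produce two residue pairs liftable to $rs\equiv2\pmod N$ (for instance $(t,2t^{-1})$ for units $t$ mod $N$), avoiding $\pm(1,2)$ and $\pm(2,1)$, with $K\neq0$ for both and a nonzero commutator.
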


\begin{remark}
  We note that the assumption in Theorem \ref{LRSLUVIGNANDNCNRSNUV} slightly differs in
  appearance from the exact statement of Theorem 6.7 in \cite{ST}. The proof of Theorem
  6.7 in \cite{ST} establishes the existence of non-commuting representation matrices,
  and by taking the contrapositive, concludes the non-existence of a basis consisting of
  modular form solutions. We emphasize that the converse of this implication is not
  strictly true: the non-existence of a modular basis does not exclusively imply
  non-commutativity, because commuting non-trivial unipotent matrices (which cannot be
  simultaneously diagonalized) also fail to possess a basis of common eigenvectors.
  However, in the present paper, our entire analysis is strictly based on the
  ``simultaneous triangularizability'' of the connection matrices. This generalized
  approach inherently and completely resolves such unipotent and non-diagonalizable
  cases. To align our strict algebraic proof of non-triangularizability with the known
  exceptions, we have explicitly combined all these underlying restrictions into the
  single hypothesis of Theorem \ref{LRSLUVIGNANDNCNRSNUV}.
\end{remark}

In the next section, we investigate when the representation matrices $N(r, s)$ and $N(u, v)$
are simultaneously triangularizable for arbitrary pairs of natural numbers $(r, s)$ and
$(u, v)$. To this end, we begin by computing the commutator of $N(r, s)$ and $N(u, v)$.

\section{Conditions for Simultaneous Triangularization}\label{PTC}

The goal of this section is to prove the following theorem, which provides the conditions
under which the representation matrices $N(r, s)$ and $N(u, v)$ associated with a pair of
integers $(r, s)$ and $(u, v)$ are simultaneously triangularizable.

\begin{theorem}\label{NRSNUVTS}
  Assume $k \in \mathbb{Q}$ and $k \not\equiv 0, 4, 5, 11 \pmod{12}$.
  A necessary and sufficient condition for the matrices $N(r, s)$ and $N(u, v)$ to be
  simultaneously triangularizable is that one of the following holds:
  \begin{enumerate}
    \item $K_{r, s}=0$ (i.e., $N(r, s)$ is a scalar matrix).
    \item $K_{u, v}=0$ (i.e., $N(u, v)$ is a scalar matrix).
    \item $C = -2$ (which occurs when $k\equiv 6\pmod{12}$ or $k\equiv 10\pmod{12}$).
      In this case, either $G_{21}=0$ or $G_{12}=0$ (depending on the sign of the weight $k$). 
      If $G_{21}=0$, the matrices take the form of the following upper
      triangular matrix with respect to the standard basis (and similarly for $N(u, v)$):
      \begin{equation*}
        N(r, s) =  \begin{pmatrix} 1 & -4i G_{12} (Z_r Z_s + 1)(Z_r Z_s - 2Z_s + 1) \\ 
        0 & (Z_r Z_s)^2 \end{pmatrix}.
      \end{equation*}
      If $G_{12}=0$, they instead take the form of a lower triangular matrix:
      \begin{equation*}
        N(r, s) =  \begin{pmatrix} 1 & 0 \\ 
        -4i G_{21} Z_r(Z_s - 1)(Z_r Z_s + 1) & (Z_r Z_s)^2 \end{pmatrix}.
      \end{equation*}
    \item $L_5 C^2 = \frac{1}{2} L_2$, where $L_5$ and $L_2$ are defined by
      \begin{align*}
        L_5 &= - \frac{W_r W_s W_u W_v}{C^4} (Z_u - Z_r)(Z_v - Z_s), \\
        L_2 &= \frac{8}{C^3} \big[ Z_u W_v (Z_s - Z_r) + Z_r W_s (Z_u - Z_v) \big] 
        (W_r W_v - W_s W_u).
      \end{align*}
      (This condition will be discussed in detail in the next section).
  \end{enumerate}
\end{theorem}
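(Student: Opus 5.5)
Two $2\times 2$ complex matrices $A,B$ are simultaneously triangularizable if and only if they share a common eigenvector. When neither is scalar, a standard criterion says this happens if and only if $\det[A,B]=0$; equivalently, the commutator $[A,B]$ is nilpotent, since its trace is always zero. We will apply this to $A=N(r,s)$ and $B=N(u,v)$.

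The plan is as follows. Write $N(r,s)=-Z_rZ_sE+\frac{K_{r,s}}{2}M_{r,s}$, with $M_{r,s}=(n_{ij})$ as in Theorem \ref{THEXFN}, and similarly for $N(u,v)$. Scalar parts do not affect commutators or eigenvectors, so
\[
[N(r,s),N(u,v)]=\frac{K_{r,s}K_{u,v}}{4}\,[M_{r,s},M_{u,v}].
\]
If $K_{r,s}=0$ or $K_{u,v}=0$, one matrix is scalar and cases (1), (2) follow. Otherwise the condition becomes $\det[M_{r,s},M_{u,v}]=0$.

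For $2\times 2$ matrices $P,Q$ we will use the identity
\[
-\det[P,Q]=\bigl(\operatorname{tr}(PQ)-\tfrac12\operatorname{tr}P\operatorname{tr}Q\bigr)^2-\bigl(\tfrac14(\operatorname{tr}P)^2-\det P\bigr)\bigl(\tfrac14(\operatorname{tr}Q)^2-\det Q\bigr)\cdot 4.
\]
More concretely, we pass to the traceless parts $P_0=P-\frac{\operatorname{tr}P}{2}E$ with entries $(\alpha,\beta,\gamma)$, where $P_0=\begin{pmatrix}\alpha&\beta\\ \gamma&-\alpha\end{pmatrix}$. Then $\det[P_0,Q_0]$ is minus the squared norm of the cross product of $(\alpha,\beta,\gamma)$ and $(\alpha',\beta',\gamma')$ under the quadratic form $\alpha^2+\beta\gamma$. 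Written out, it equals $-\bigl((\beta\gamma'-\gamma\beta')^2-4(\alpha\beta'-\beta\alpha')(\alpha\gamma'-\gamma\alpha')\bigr)$ up to sign.

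Substituting $n_{12}=iG_{12}(\cdots)$ and $n_{21}=iG_{21}(\cdots)$, every term involving $\beta,\gamma$ carries the factor $G_{12}G_{21}$ once in each product $\beta\gamma'$ or $\gamma\beta'$. We will need to evaluate $G_{12}G_{21}$ in closed form. Using the reflection and duplication formulas for $\Gamma$, it should reduce to an elementary function of $\theta$, something like a rational expression in $C=\csc\theta$; we expect $-G_{12}G_{21}=\frac{C(C+2)}{16}\cdot(\text{const})$ or a similar expression. The exact form must be derived, and it is exactly what makes $C=-2$ special.

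When $C=-2$ the product vanishes, and one checks that precisely one of $G_{12},G_{21}$ has a pole of a $\Gamma$ in its denominator. That gate makes the matrices simultaneously triangular in the standard basis. Plugging $C=-2$, i.e.\ $W_l=-2(Z_l-1)$, into $n_{11},n_{22}$ and $K_{r,s}$ gives the displayed triangular forms of case (3) after routine simplification. For the mod-12 statement, we note that $C=-2$ means $\sin\theta=-1/2$, which should translate to $k\equiv 6,10\pmod{12}$.

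When $C\ne -2$ (and $C\neq 0$), the factor $G_{12}G_{21}$ is a nonzero explicit function of $C$. We then expand $\det[M_{r,s},M_{u,v}]$ as a polynomial in the $Z$'s, $W$'s and $C$, and aim to factor it. The expected outcome is that a common nonzero factor (involving $C+2$ and powers of $C$) can be cancelled, leaving exactly $L_5C^2-\frac12 L_2=0$. The structural guide is the relation $W_l=(Z_l-1)C$ together with the fact that $Z_l$ lies on the unit circle, which is why $L_5$ and $L_2$ are written with powers of $1/C$.

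This expansion is the main obstacle. We would organize it by first computing $\alpha=\frac12(n_{11}-n_{22})$ for both matrices, as well as the cross terms $\beta\gamma'-\gamma\beta'$ and $\alpha\beta'-\beta\alpha'$. Each of these should factor individually through expressions such as $Z_rW_s$ and $2(Z_s-Z_r)+(Z_r-1)W_s$. Only after that would we assemble the determinant, checking on the factor $(Z_u-Z_r)(Z_v-Z_s)$ and $(W_rW_v-W_sW_u)$ appearing in $L_5,L_2$.
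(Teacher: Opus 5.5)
Your route is the paper's: drop the scalar parts so that $[N(r,s),N(u,v)]=\frac14K_{r,s}K_{u,v}[M_{r,s},M_{u,v}]$, use that two $2\times2$ matrices are simultaneously triangularizable exactly when their commutator has zero determinant, factor out $G_{12}G_{21}$, and treat $C=-2$ separately. Your cross-product formula for $\det[P_0,Q_0]$ is the paper's $-x^2-yz$ in other notation, and your treatment of $C=-2$ agrees with the paper's. The gap is in the step that carries the theorem, the factorization of the determinant. You leave it as ``the main obstacle'', and your prediction of its outcome is off in two places. As written, the argument does not arrive at condition (4).

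First, your guess for $G_{12}G_{21}$ has a spurious factor of $C$. Pair $\Gamma(\frac{1+k}{6})$ with $\Gamma(\frac{5-k}{6})$, $\Gamma(\frac{2+k}{12})$ with $\Gamma(\frac{10-k}{12})$, and $\Gamma(\frac{6+k}{12})$ with $\Gamma(\frac{6-k}{12})$. The reflection formula alone (no duplication) then gives
\[
G_{12}G_{21}=\frac{\sin(\frac{\theta}{2}+\frac{\pi}{12})\,\sin(\frac{\theta}{2}+\frac{5\pi}{12})}{8\sin\theta}=\frac{1+2\sin\theta}{32\sin\theta}=\frac{C+2}{32}.
\]
Writing the commutator entries as $x=-\frac{C+2}{32}P_x$, $y=\frac{iG_{12}}{2}P_y$, $z=\frac{iG_{21}}{2}P_z$, the determinant becomes $-\frac{K_{r,s}^2K_{u,v}^2}{16384}(C+2)f(C)$ with $f(C)=(C+2)P_x^2-8P_yP_z$.

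Second, $f(C)$ is not a power of $C$ times $L_5C^2-\frac12L_2$; it equals $C^2(C-2)(L_5C^2-\frac12L_2)$. Cancelling only $C+2$ and powers of $C$, as you propose, leaves $(C-2)(L_5C^2-\frac12L_2)$. To reach exactly (4) you must detect this extra factor and invoke $k\not\equiv 0,4\pmod{12}$, i.e.\ $C\neq 2$, which your plan never uses.

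The paper isolates both factors before expanding:
\begin{enumerate}
\item $C^2\mid f$, because every term of $P_x,P_y,P_z$ contains some $W_l=(Z_l-1)C$.
\item $(C-2)\mid f$, because at $C=2$ one has $1+\frac12W_l=Z_l$. This forces $P_x=-2P_z$ and $P_y=2P_z$, hence $f(2)=16P_z^2-16P_z^2=0$.
\end{enumerate}
Only then is the quotient, a quadratic in $C$ with vanishing linear coefficient, identified with $L_5C^2-\frac12L_2$ by direct expansion. Here $L_5$ and $L_2$ do not depend on $C$, since $W_l/C=Z_l-1$. You need these two ingredients, or an equivalent fully explicit expansion, to turn your plan into a proof.
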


To achieve this, we begin by computing an explicit formula for the commutator of $N(r, s)$
and $N(u, v)$.

\begin{proposition}\label{NRSNUVCM}
The commutator of the matrices $N(r, s)$ and $N(u, v)$ is given by
\begin{equation}
[N(r, s), N(u, v)] = \frac{1}{4} K_{r,s} K_{u,v}
\begin{pmatrix}
x & y \\
z & -x
\end{pmatrix}.
\end{equation}
Here, the entries $x, y, z$ are given by
\begin{align*}
x =& -\frac{C+2}{32} \Big( 2 \big[ Z_u W_v(Z_s - Z_r) + Z_r W_s(Z_u - Z_v) \big]
  - W_s W_v(Z_u - Z_r) \Big), \\
y =& \frac{i G_{12}}{2} \Big( W_s(Z_v - Z_u)\big( Z_r + 1 + \frac{1}{2}W_r \big) \\
  &\qquad\qquad + W_v(Z_r - Z_s)\big( Z_u + 1 + \frac{1}{2}W_u \big) + W_s W_v(Z_u - Z_r) \Big),  \\
z =& \frac{i G_{21}}{2} \Big( Z_u W_v (Z_r - Z_s) + Z_r W_s (Z_v - Z_u) 
  + \frac{1}{4} C W_s W_v (Z_u - Z_r) \Big).
\end{align*}
\end{proposition}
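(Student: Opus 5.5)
Since scalar parts commute, we write $N(r,s) = -Z_rZ_sE + \frac{K_{r,s}}{2}A_{r,s}$ with $A_{r,s}=(n_{ij})$, and similarly $N(u,v) = -Z_uZ_vE + \frac{K_{u,v}}{2}A_{u,v}$. Bilinearity of the commutator then gives
\begin{equation*}
[N(r,s),N(u,v)] = \frac{1}{4}K_{r,s}K_{u,v}\,[A_{r,s},A_{u,v}].
\end{equation*}
This isolates the prefactor in the statement. It remains to show that $[A_{r,s},A_{u,v}] = \begin{pmatrix} x & y\\ z & -x\end{pmatrix}$ with $x,y,z$ as given. The commutator of any two $2\times 2$ matrices is traceless, so the $(2,2)$ entry is automatically $-x$. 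For $A=(a_{ij})$ and $B=(b_{ij})$ the entries are
\begin{align*}
[A,B]_{11} &= a_{12}b_{21}-a_{21}b_{12},\\
[A,B]_{12} &= (a_{11}-a_{22})b_{12} - (b_{11}-b_{22})a_{12},\\
[A,B]_{21} &= (b_{11}-b_{22})a_{21} - (a_{11}-a_{22})b_{21}.
\end{align*}

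We compute each entry in turn. For the $(1,1)$ entry, $n_{12}n_{21}'-n_{21}n_{12}'$ carries the product $G_{12}G_{21}$ times a polynomial in the $Z$'s and $W$'s. The key input is an identity for $G_{12}G_{21}$. By the reflection and duplication formulas for $\Gamma$, $G_{12}G_{21}$ should reduce to an elementary expression in $\theta$: the arguments $\frac{1+k}{6}$ and $\frac{5-k}{6}$ sum to $1$, and the pairs $\frac{2+k}{12},\frac{10-k}{12}$ and $\frac{6+k}{12},\frac{6-k}{12}$ also sum to $1$. We would derive $G_{12}G_{21} = \frac{C+2}{c_0\,C}$ or a similar expression, with constant $c_0$ fixed by the computation. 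The factor $-\frac{C+2}{32}$ in $x$ strongly suggests such an identity, so establishing it precisely (including sign and normalization) is the first step. Once it is in hand, $x$ follows by expanding $-i^2(\cdots)$ and regrouping, using $Z_l-1 = W_l/C$.

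For the $(1,2)$ and $(2,1)$ entries we need $n_{11}-n_{22}$. We simplify it with $Z_s-1=W_s/C$:
\begin{equation*}
n_{11}-n_{22} = \tfrac12 + \tfrac18\big(\tfrac{2}{C}+1\big)W_s(2+W_r) - \tfrac14 Z_r(2-W_s).
\end{equation*}
Substituting $Z_r = 1 + W_r/C$ turns everything into a polynomial in $W$'s and $1/C$. We then substitute into the formulas above and factor out $iG_{12}/2$ and $iG_{21}/2$, respectively. The $z$ entry is shorter, since $n_{21}$ is a monomial. For $y$ one collects terms into the three groups displayed, again converting freely between $Z_l$ and $W_l$ and using the relation $W_l = (Z_l-1)C$ together with $|Z_l|=1$ if needed.

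The main obstacle is the $x$ entry, specifically pinning down the gamma-function identity for $G_{12}G_{21}$ in terms of $C=\csc\theta$. We would try first to write $\Gamma(\frac{1+k}{6})\Gamma(\frac{5-k}{6}) = \pi/\sin\theta$ and to apply the duplication formula to $\Gamma(\frac{2+k}{12})\Gamma(\frac{6+k}{12})$ and $\Gamma(\frac{6-k}{12})\Gamma(\frac{10-k}{12})$, which gives $\Gamma(\frac{1+k}{6})$ and $\Gamma(\frac{5-k}{6})$ times powers of $2$. That route might instead yield something like $\frac{1}{8}\cdot 2^{\cdots}$ with no $C$-dependence. In that case the factor $C+2$ must come from the polynomial part rather than from $G_{12}G_{21}$, and we would verify the $x$ entry by direct expansion in the variables $Z_r,Z_s,Z_u,Z_v$ and $C$. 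The $y$ and $z$ entries involve only bookkeeping, which we would check by comparing coefficients.
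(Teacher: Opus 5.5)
Your strategy is the paper's: split off the scalar parts, extract $\frac14K_{r,s}K_{u,v}$ by bilinearity, use tracelessness, and compute three entries. Your general commutator formulas and your expression for $n_{11}-n_{22}$ are correct. Your $n_{11}-n_{22}$ is the negative of the paper's $D_A=\frac12(Z_r-Z_s)-\frac12Z_rW_s-\frac18W_rW_s$, but your entry formulas use the matching sign convention, so the $y$ and $z$ bookkeeping will go through.

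The gap is the $x$ entry. You leave it resting on an identity you neither state correctly nor have a working route to. The paper uses $G_{12}G_{21}=\frac{C+2}{32}$, exactly. Your guess $\frac{C+2}{c_0C}$ has the wrong shape. The duplication formula does not apply to $\Gamma(\frac{2+k}{12})\Gamma(\frac{6+k}{12})$ or to $\Gamma(\frac{6-k}{12})\Gamma(\frac{10-k}{12})$, because those arguments differ by $\frac13$, not $\frac12$.

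Your fallback also cannot work. The $(1,1)$ entry is $a_{12}b_{21}-a_{21}b_{12}=i^2G_{12}G_{21}\bigl(\tilde A_{12}\tilde B_{21}-\tilde A_{21}\tilde B_{12}\bigr)$, with $\tilde A_{12}=2(Z_s-Z_r)+W_s(Z_r-1)$, $\tilde A_{21}=Z_rW_s$, and similarly for $\tilde B$. Note the prefactor is $i^2=-1$; that is where the minus sign in $x$ comes from. The cancellation $Z_u(Z_r-1)-Z_r(Z_u-1)=Z_r-Z_u$ turns the polynomial factor into exactly the bracket $2[\cdots]-W_sW_v(Z_u-Z_r)$ displayed in $x$. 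That bracket has no factor $C+2$. So if $G_{12}G_{21}$ were independent of $C$, the stated formula for $x$ would be false, and no direct expansion could rescue it.

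The fix is the observation you already made: apply the reflection formula to all three complementary pairs. Set $\phi=\frac{(2+k)\pi}{12}$ and $\psi=\frac{(6+k)\pi}{12}$. Then
\[
G_{12}G_{21}=\frac{\pi}{8}\cdot\frac{\pi/\sin\theta}{\bigl(\pi/\sin\phi\bigr)\bigl(\pi/\sin\psi\bigr)}=\frac{C}{8}\sin\phi\sin\psi .
\]
Since $\psi-\phi=\frac{\pi}{3}$ and $\phi+\psi=\theta+\frac{\pi}{2}$, the product-to-sum formula gives $\sin\phi\sin\psi=\frac12\bigl(\frac12+\sin\theta\bigr)$. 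Hence
\[
G_{12}G_{21}=\frac{C}{16}\Bigl(\frac12+\frac1C\Bigr)=\frac{C+2}{32}.
\]
With this, $x=-\frac{C+2}{32}\bigl(\tilde A_{12}\tilde B_{21}-\tilde A_{21}\tilde B_{12}\bigr)$ follows at once, and the rest of your plan matches the paper's computation.
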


\begin{proof}
  By Theorem \ref{THEXFN}, we can decompose $N(r, s) = -Z_r Z_s E + \frac{1}{2} K_{r,s}
  \tilde{N}(r, s)$. Since a scalar matrix commutes with any matrix, the term $-Z_r Z_s E$
  vanishes in the computation of the commutator and can be ignored. Thus, we have
  \begin{equation*}
    [N(r, s), N(u, v)] = \frac{1}{4} K_{r,s} K_{u,v} [\tilde{N}(r, s), \tilde{N}(u, v)].
  \end{equation*} 
  Furthermore, noting that the trace of a commutator is always zero, it can be written in
  the form
  $[\tilde{N}(r, s), \tilde{N}(u, v)]=\begin{pmatrix} x & y \\ z & -x \end{pmatrix}$.

  Let 
  \begin{align*}
    \tilde{A}_{12} &= 2(Z_s - Z_r) + W_s(Z_r - 1),\quad
    \tilde{A}_{21} = Z_r W_s,\\
    \tilde{B}_{12} &= 2(Z_v - Z_u) + W_v(Z_u - 1),\quad
    \tilde{B}_{21} = Z_u W_v.
  \end{align*}
  Also, let $D_A$ and $D_B$ be the differences between the diagonal entries of
  $\tilde{N}(r, s)$ and $\tilde{N}(u, v)$, respectively.

  Since $G_{12}G_{21} = \frac{C+2}{32}$, we have
  \begin{equation*}
    x=-\frac{C+2}{32}(\tilde{A}_{12}\tilde{B}_{21} - \tilde{A}_{21}\tilde{B}_{12}).
  \end{equation*}
  By noting the cancellation $Z_r(Z_u - 1) - Z_u(Z_r - 1) = Z_r - Z_u$, we obtain $x$ as
  in the proposition.

  For $z = iG_{21}(\tilde{B}_{21}D_A - \tilde{A}_{21}D_B)$, simplifying using the relation
  $W_l = (Z_l - 1)C$ yields
  \begin{equation*}
    D_A = \frac{1}{2}(Z_r - Z_s) - \frac{1}{2} Z_r W_s - \frac{1}{8} W_r W_s
  \end{equation*}
  (and similarly for $D_B$). By using this together with the cancellation $Z_u W_r - Z_r
  W_u = -C(Z_u - Z_r)$, we can derive $z$ as in the proposition.
  
  Finally, for $y = iG_{12}(\tilde{A}_{12}D_B - \tilde{B}_{12}D_A)$, observing that the
  difference of the leading products $(Z_s - Z_r)(Z_u - Z_v)$ cancels out to zero, and
  grouping the remaining terms by $(Z_u - Z_v)$ and $(Z_r - Z_s)$, we obtain $y$ as stated.
\end{proof}

As seen from the proposition above, the commutator matrix of $N(r, s)$ and $N(u, v)$ is
not very simple. However, its determinant can be written in a relatively compact form as
follows.

\begin{proposition}\label{NRSNUVDET}
  The determinant of the commutator of $N(r, s)$ and $N(u, v)$ has the following
  expression:
  \begin{equation*}
    -\frac{K_{r,s}^2 K_{u,v}^2}{16384} C^2(C^2 - 4) \left( L_5 C^2 - \frac{1}{2} L_2 \right).
  \end{equation*}
  Here, the coefficients $L_5$ and $L_2$ are given in terms of the parameters $Z_i, W_i$,
  and $C$ as follows:
  \begin{align*}
    L_5 &= - \frac{W_r W_s W_u W_v}{C^4} (Z_u - Z_r)(Z_v - Z_s), \\
    L_2 &= \frac{8}{C^3} \big[ Z_u W_v (Z_s - Z_r) + Z_r W_s (Z_u - Z_v) \big] (W_r W_v - W_s W_u).
  \end{align*}
  Note that both $L_5$ and $L_2$ can be written as symmetric polynomials in $Z_l$ by using
  the relation $W_l=(Z_l-1)C$.
\end{proposition}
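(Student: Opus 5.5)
The plan is to compute $\det$ of the traceless matrix from Proposition~\ref{NRSNUVCM} directly. By that proposition,
\begin{equation*}
\det [N(r,s),N(u,v)] = \frac{K_{r,s}^2K_{u,v}^2}{16}\,\det\begin{pmatrix} x & y\\ z & -x\end{pmatrix} = -\frac{K_{r,s}^2K_{u,v}^2}{16}\,(x^2+yz).
\end{equation*}
So it suffices to show
\begin{equation*}
x^2+yz = \frac{1}{1024}\,C^2(C^2-4)\Big(L_5C^2-\tfrac12 L_2\Big).
\end{equation*}
In $yz$ the gamma factors enter only through $(iG_{12})(iG_{21}) = -G_{12}G_{21} = -\frac{C+2}{32}$. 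Likewise $x^2$ carries the factor $\frac{(C+2)^2}{1024}$. Hence $x^2+yz = \frac{C+2}{1024}\,Q$, where $Q$ is a polynomial in the $Z_l$ and $C$ alone, and the identity reduces to proving $Q = C^2(C-2)(L_5C^2-\frac12L_2)$.

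To organize the computation, I will use the abbreviations
\begin{equation*}
P = Z_uW_v(Z_s-Z_r)+Z_rW_s(Z_u-Z_v),\qquad R = W_sW_v(Z_u-Z_r),
\end{equation*}
which are exactly the blocks appearing in $x$, $z$ and $L_2$. Then
\begin{equation*}
x = -\tfrac{C+2}{32}(2P-R),\qquad z = \tfrac{iG_{21}}{2}\big(-P+\tfrac{C}{4}R\big).
\end{equation*}
For $y$, the relation $W_l=(Z_l-1)C$ gives $Z_l+1+\frac12W_l = (Z_l-1)(1+\frac C2)+2$. This lets me write $y = \frac{iG_{12}}{2}\big(aP' + bR + \dots\big)$, where $P'$ is $P$ with the roles of the $Z$-factors rearranged. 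I will then substitute $W_l=(Z_l-1)C$ everywhere, treat the result as a polynomial in $C$ with coefficients in $\mathbb{Z}[Z_r,Z_s,Z_u,Z_v]$, and compare coefficients of each power of $C$ on both sides. The target has the shape $C^2(C-2)(\alpha C^2+\beta C)$ up to the scaling in $L_5,L_2$. Since $L_5C^2 = -C^2\prod(Z_l-1)\,(Z_u-Z_r)(Z_v-Z_s)$ and $\frac12L_2 = 4C^2[\ldots](\ldots)$, where the bracketed factors are polynomials in the $Z$'s times explicit powers of $C$, only finitely many coefficient identities in the $Z$'s need to be checked.

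The main obstacle is that raw expansion of $yz$ produces many terms, and the cancellation to a product is not visible term by term. To manage this, I will first check that $Q$ vanishes at $C=0$ to order two and at $C=2$. This extracts the factor $C^2(C-2)$ structurally rather than by brute force. Order-two vanishing at $C=0$ should follow because every term of $P$, $R$, $y$ and $z$ carries at least one $W$, hence a factor of $C$. The factor $C-2$ should come from the $(1+\frac C2)$ and $\frac C4$ combinations. After these factors are removed, the remaining quotient is at most quadratic in $C$, and I will match its coefficients against $L_5C^2$ and $L_2$ directly. As a sanity check, I will also verify the special case $(r,s)=(u,v)$, where both sides must vanish.
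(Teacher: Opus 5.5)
Your plan is correct and is essentially the paper's proof. Both arguments use $G_{12}G_{21}=\frac{C+2}{32}$ to reduce to the gamma-free polynomial $(C+2)P_x^2-8P_yP_z$ (your $Q$, with $P_x,P_y,P_z$ the gamma-free parts of $x,y,z$), obtain the factor $C^2$ because every term carries a $W_l$, obtain the factor $C-2$ from the evaluation at $C=2$ (where $P_x=-2P_z$ and $P_y=2P_z$), and finish by direct coefficient comparison. One bookkeeping correction: after substituting $W_l=(Z_l-1)C$, both $L_5$ and $L_2$ are independent of $C$ (so $\frac12 L_2$ carries no factor $C^2$), and the quotient $Q/(C^2(C-2))$ you must match is $L_5C^2-\frac12 L_2$, with zero linear coefficient and constant term $-\frac12 L_2$, not something of the shape $\alpha C^2+\beta C$.
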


\begin{proof}
  From the gamma function relation $G_{12}G_{21} = \frac{C+2}{32}$ and Proposition
  \ref{NRSNUVCM}, we set
  \begin{align*}
    P_x &= 2 \big[ Z_u W_v (Z_s - Z_r) + Z_r W_s (Z_u - Z_v) \big] - W_s W_v (Z_u - Z_r), \\
    P_y &= W_s (Z_v - Z_u)\left(Z_r + 1 + \frac{1}{2}W_r\right) + 
           W_v (Z_r - Z_s)\left(Z_u + 1 + \frac{1}{2}W_u\right) + W_s W_v (Z_u - Z_r), \\
    P_z &= Z_u W_v (Z_r - Z_s) + Z_r W_s (Z_v - Z_u) + \frac{1}{4} C W_s W_v (Z_u - Z_r).
  \end{align*}
  Using these, the determinant of the commutator can be written as
  \begin{equation*}
    -\frac{1}{16384} K_{r,s}^2 K_{u,v}^2 (C+2)\left( (C+2)P_x^2 - 8P_y P_z \right).
  \end{equation*}
  Let us expand $f(C) = (C+2)P_x^2 - 8P_y P_z$ as a polynomial in $C$, and let $L_i$ be
  the coefficient of $C^i$.

  The definitions of $P_x, P_y$, and $P_z$ all contain $W_l = (Z_l - 1)C$ in every term.
  First, when $C = 0$, we have $W_l = 0$, which implies $P_x=P_y=P_z=0$.
  Therefore, $P_x^2$ and $P_y P_z$ are of order $C^2$ or higher, meaning $f(C)$ has $C^2$
  as a factor.

  Next, let $C = 2$. From the relation $W_l = 2(Z_l - 1)$, we have $1 + \frac{1}{2}W_l = Z_l$.
  To make the expressions more readable, we define $L$ and $M$ as follows:
  \begin{align*}
    L &= Z_u W_v(Z_s - Z_r) + Z_r W_s(Z_u - Z_v), \\
    M &= W_s W_v(Z_u - Z_r).
  \end{align*}
  Using these, we have $P_x = 2L - M$.
  Substituting the condition $C=2$ into $P_x, P_y, P_z$ and simplifying, we see that
  \begin{align*}
    P_z &= -L + \frac{1}{2}M, \\
    P_x &= 2L - M = -2P_z,\\
    P_y &= W_s (Z_v - Z_u)(2Z_r) + W_v (Z_r - Z_s)(2Z_u) + M \\
        &= -2 \big[ Z_r W_s (Z_u - Z_v) + Z_u W_v (Z_s - Z_r) \big] + M \\
        &= -2L + M = 2P_z.
  \end{align*}
  Substituting these into $f(C)$, we evaluate it as
  \begin{equation*}
    f(2) = (2+2)(-2P_z)^2 - 8(2P_z)(P_z) = 16P_z^2 - 16P_z^2 = 0.
  \end{equation*}
  Thus, we have shown that $f(C)$ also has $(C-2)$ as a factor.

  Finally, by expanding $f(C)$ with respect to $C$ and arranging it in descending powers
  of $C$, direct computation verifies that it takes the following form:
  \begin{equation*}
    f(C) = L_5 C^5 - 2L_5 C^4 - \frac{1}{2} L_2 C^3 + L_2 C^2.
  \end{equation*}
  Here, $L_5$ and $L_2$ are the expressions given in this proposition.
\end{proof}

By using Proposition \ref{NRSNUVDET} and Lemma \ref{SABTR} below, the necessary and
sufficient condition for $N(r, s)$ and $N(u, v)$ to be simultaneously triangularizable
can be explicitly expressed as an algebraic condition using the parameters $Z_i, W_i$,
and $C$.

\begin{lemma}\label{SABTR}
  For $2 \times 2$ matrices $A, B$ and their commutator $X$, the following are equivalent:
  \begin{enumerate}
    \item $A$ and $B$ are simultaneously triangularizable.
    \item $X^2=O$.
    \item $\det X=0$.
  \end{enumerate}
\end{lemma}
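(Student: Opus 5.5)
We work over $\mathbb{C}$ with $A,B$ arbitrary $2\times 2$ matrices and $X=AB-BA$, and prove the cycle $(1)\Rightarrow(3)\Rightarrow(2)\Rightarrow(1)$.

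\emph{$(1)\Rightarrow(3)$.} Choose $P$ with $P^{-1}AP$ and $P^{-1}BP$ both upper triangular. The commutator of two upper triangular matrices has diagonal entries $a_{ii}b_{ii}-b_{ii}a_{ii}=0$. Hence $P^{-1}XP$ is strictly upper triangular, so it is nilpotent and $\det X=0$.

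\emph{$(3)\Rightarrow(2)$.} We have $\operatorname{tr}X=0$ always. By Cayley--Hamilton, $X^2-(\operatorname{tr}X)X+(\det X)E=O$, which reduces to $X^2=-(\det X)E$. So $\det X=0$ gives $X^2=O$; conversely, $X^2=O$ forces $\det X=0$.

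\emph{$(2)\Rightarrow(1)$.} This is the only real step. It suffices to find a common eigenvector $v$ of $A$ and $B$. Then, in any basis $(v,w)$, both matrices become upper triangular.

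If $X=O$, then $A$ and $B$ commute. Any eigenspace $V_\lambda$ of $A$ is $B$-invariant, so $B$ has an eigenvector inside $V_\lambda$, and this is a common eigenvector.

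If $X\neq O$ but $X^2=O$, then $X$ has rank one and $\ker X=\operatorname{im}X$ is a line $\ell$. The key observation is that $\ell$ is invariant under both $A$ and $B$. To see this, write $AX-XA=[A,X]$ and compute
\[
X[A,X]=XAX-X^2A=XAX,\qquad [A,X]X=AX^2-XAX=-XAX.
\]
We want $A(\ker X)\subseteq\ker X$, that is, $XAX=O$. Since $X=[A,B]$, a direct route is the identity $\operatorname{tr}(XA)=\operatorname{tr}(ABA-BAA)=0$. For rank-one nilpotent $X=vw^{T}$ with $w^{T}v=0$, we have $XAX=(w^{T}Av)\,vw^{T}$ and $\operatorname{tr}(XA)=w^{T}Av$. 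The vanishing of this trace therefore gives $XAX=O$, so $A\ell\subseteq\ker X=\ell$. The same argument with $\operatorname{tr}(XB)=0$ gives $B\ell\subseteq\ell$.

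Thus any nonzero $v\in\ell$ is a common eigenvector, which proves (1). The main obstacle is exactly this invariance of $\ker X$, and the trace identity $\operatorname{tr}([A,B]A)=0$ settles it cleanly.
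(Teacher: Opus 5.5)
Your proof is correct. It takes a genuinely different route from the paper, which disposes of the lemma with the single sentence ``This follows by direct computation.''

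A direct computation would presumably normalize $A=\begin{pmatrix} a & b\\ 0 & d\end{pmatrix}$, take $B=\begin{pmatrix} p & q\\ g & h\end{pmatrix}$ generic, and factor
\[
\det[A,B]=g\bigl((a-d)^2q+(a-d)b(h-p)-b^2g\bigr).
\]
One then checks case by case that each vanishing factor produces a common eigenvector:
\begin{itemize}
\item $g=0$ gives $e_1$;
\item if $a\neq d$, the second factor vanishes exactly when $(b,d-a)^{T}$ is an eigenvector of $B$;
\item if $a=d$, the determinant is $-b^2g^2$, so either $A$ is scalar or $g=0$.
\end{itemize}

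Your argument avoids this case split and is coordinate-free. The equivalence $(2)\Leftrightarrow(3)$ is immediate from Cayley--Hamilton together with $\operatorname{tr}X=0$. For $(2)\Rightarrow(1)$, you combine the cyclic-trace identities $\operatorname{tr}(XA)=\operatorname{tr}(XB)=0$ with $XAX=\operatorname{tr}(XA)\,X$ for rank-one $X$. This shows the line $\ker X=\operatorname{im}X$ is invariant under both matrices, which is the cleanest way to see why nilpotency of the commutator suffices. A bonus is that it names the common eigenvector explicitly as any nonzero vector in $\operatorname{im}[A,B]$ when the commutator is nonzero.

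Two small remarks. The displayed computation of $X[A,X]$ and $[A,X]X$ is never used and can be deleted. Your restriction to $\mathbb{C}$ is genuinely needed: over $\mathbb{R}$ a rotation commutes with itself but is not triangularizable. That is harmless here, since the paper's matrices are complex.
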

\begin{proof}
  This follows by direct computation.
\end{proof}

\begin{proof}[proof of Theorem \ref{NRSNUVTS}]
  From Lemma \ref{SABTR}, a necessary and sufficient condition for $N(r, s)$ and $N(u, v)$
  to be simultaneously triangularizable is that the determinant of their commutator is $0$.

  From the expression of the determinant of the commutator in Proposition \ref{NRSNUVDET},
  \begin{equation*}
    -\frac{K_{r,s}^2 K_{u,v}^2}{16384} C^2(C^2 - 4) \left( L_5 C^2 - \frac{1}{2} L_2 \right),
  \end{equation*}
  we can verify that this condition is equivalent to one of the conditions (1) through (4)
  of this theorem holding, as follows.

  Suppose condition (1) holds. In this case, by Theorem \ref{THEXFN}, $N(r, s)$ is a scalar
  matrix. Similarly, if condition (2) holds, $N(u, v)$ is a scalar matrix. In either case,
  they are trivially simultaneously triangularizable.

Suppose condition (3) holds, that is, $C = -2$.
  Since $C = \csc\left(\frac{1+k}{6}\pi\right) = -2$, we have $k \equiv 6 \pmod{12}$ or
  $k \equiv 10 \pmod{12}$. Under these congruences, depending on the value of $k$,
  either $\Gamma\left(\frac{6-k}{12}\right)$ (or $\Gamma\left(\frac{10-k}{12}\right)$)
  in the denominator of $G_{21}$ has a pole yielding $G_{21}=0$, or
  $\Gamma\left(\frac{2+k}{12}\right)$ (or $\Gamma\left(\frac{6+k}{12}\right)$)
  in the denominator of $G_{12}$ has a pole yielding $G_{12}=0$.
  (This is consistent with the identity $G_{12}G_{21} = \frac{C+2}{32} = 0$).

  Furthermore, substituting $W_l = -2(Z_l - 1)$ into each entry of $N(r, s)$ and
  simplifying, if $G_{21} = 0$, the $(2,1)$-entry completely vanishes, reducing to the upper
  triangular form as stated in the theorem. If $G_{12} = 0$, the $(1,2)$-entry vanishes,
  reducing to the lower triangular form.
  Since $N(u, v)$ also becomes a triangular matrix of the same type in exactly the same way, these
  matrices are already triangularized with respect to a common basis, and thus are clearly
  simultaneously triangularizable.

  Let $C = 2$.
  Since $C = \csc\left(\frac{1+k}{6}\pi\right) = 2$, we have $k \equiv 0 \pmod{12}$ or
  $k \equiv 4 \pmod{12}$. However, these cases have been excluded.

  Finally, suppose we are in none of the above cases.
  Then we have $K_{r,s} \neq 0$, $K_{u,v} \neq 0$, and $C \neq \pm 2$ (i.e., $C^2 - 4 \neq 0$).
  Note that since there is no weight $k$ such that $C=0$, $C^2 \neq 0$ is also guaranteed.
  Therefore, the necessary and sufficient condition for the determinant in Proposition
  \ref{NRSNUVDET} to be $0$ is that the remaining factor is $0$, namely,
  \begin{equation*}
    L_5 C^2 - \frac{1}{2} L_2 = 0,
  \end{equation*}
  which is precisely condition (4).
\end{proof}

\begin{remark}
  We add a supplementary note on the simultaneous triangularizability in the case of $C=2$.
  For $C=-2$, since $G_{21}=0$ (or $G_{12}=0$), the matrix trivially degenerated into a triangular
  matrix with respect to the standard basis. On the other hand, for $C=2$, we have
  $G_{12}G_{21} = \frac{1}{8} \neq 0$, and non-diagonal entries remain in the standard
  basis. However, direct computation verifies that multiplying $N(r, s)$ from the right by
  the vector $\mathbf{v} = (-4iG_{12},  1)^\mathrm{T}$ always yields an eigenvector with
  eigenvalue $(Z_r Z_s)^2$, independent of the parameters $r, s$. Furthermore, the other eigenvalue
  is $1$, which is exactly the same pair of eigenvalues as in the case of $C=-2$. In other words,
  the cases $C=-2$ and $C=2$ have completely identical spectra (eigenvalues) after
  diagonalization or triangularization, indicating a sort of duality where only the
  orientations of the eigenspaces differ.
\end{remark}

Among the cases shown in Theorem \ref{NRSNUVTS}, condition (4) is the only condition that
allows for a non-trivial simultaneous triangularization when $C \neq \pm 2$. In the next
section, our goal is to completely write down this condition as an algebraic condition
using the parameters $\theta, r, s, u, v$.

\section{Algebraic Criterion for Simultaneous Triangularization}\label{PTCAC}

The goal of this section is to show that the equation given in condition (4) of Theorem
\ref{NRSNUVTS} reduces to an equation involving real trigonometric functions, and that it
admits no non-trivial solutions.

\begin{proposition}\label{NRSNUVTRIG}
  Assume $\sin\theta \neq 0$ and let $x_l = l\theta$ for $l\in \{r, s, u, v\}$.
  The equation $L_5 C^2 - \frac{1}{2} L_2 = 0$ is equivalent to
  \begin{equation}
    R_5 - \sin^2\theta \cdot R_2 = 0, \label{eq:real_trig_cond}
  \end{equation}
  where $R_5$ and $R_2$ are real-valued functions defined as follows:
  \begin{align*}
    R_5 &= \sin(x_r)\sin(x_s)\sin(x_u)\sin(x_v)\sin(x_u-x_r)\sin(x_v-x_s), \\
    R_2 &= \sin(x_r)\sin^2(x_v)\sin(x_s-x_r)\cos(x_s) \\
      &\quad - \sin(x_s)\sin(x_u)\sin(x_v)\sin(x_s-x_r)\cos(x_u-x_r-x_v) \\
      &\quad + \sin(x_r)\sin(x_s)\sin(x_v)\sin(x_u-x_v)\cos(x_r-x_s-x_u) \\
      &\quad - \sin^2(x_s)\sin(x_u)\sin(x_u-x_v)\cos(x_v).
  \end{align*}
\end{proposition}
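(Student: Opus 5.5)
The plan is to rewrite every quantity in $L_5$ and $L_2$ in half-angle form. Since $Z_l = e^{2il\theta} = e^{2ix_l}$, we have
\begin{equation*}
  Z_l - 1 = 2i\,e^{ix_l}\sin x_l, \qquad Z_a - Z_b = 2i\,e^{i(x_a+x_b)}\sin(x_a-x_b),
\end{equation*}
and with $C = 1/\sin\theta$ this gives $W_l = 2i\,e^{ix_l}\sin x_l/\sin\theta$. Substituting into $L_5$ is immediate:
\begin{equation*}
  \frac{W_rW_sW_uW_v}{C^4} = 16\,e^{i(x_r+x_s+x_u+x_v)}\sin x_r\sin x_s\sin x_u\sin x_v .
\end{equation*}
The two remaining factors contribute $(2i)^2 e^{i(x_u+x_r+x_v+x_s)}\sin(x_u-x_r)\sin(x_v-x_s)$. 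Hence
\begin{equation*}
  L_5 = 64\,e^{2i\Sigma} R_5, \qquad \Sigma = x_r+x_s+x_u+x_v,
\end{equation*}
and therefore $L_5C^2 = 64\,e^{2i\Sigma}R_5/\sin^2\theta$.

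Next I treat $L_2$ factor by factor. First,
\begin{equation*}
  W_rW_v - W_sW_u = \frac{-4\,e^{i\Sigma}}{\sin^2\theta}\bigl(\sin x_r\sin x_v - \sin x_s\sin x_u\bigr).
\end{equation*}
The bracket is $Z_uW_v(Z_s-Z_r) + Z_rW_s(Z_u-Z_v)$. Its first term equals $(2i)^2 e^{i(2x_u+x_v+x_s+x_r)}\sin x_v\sin(x_s-x_r)/\sin\theta$, and its second term equals $(2i)^2 e^{i(2x_r+x_s+x_u+x_v)}\sin x_s\sin(x_u-x_v)/\sin\theta$. Factoring out $e^{i\Sigma}$ leaves
\begin{equation*}
  -\frac{4}{\sin\theta}\Bigl(e^{i(x_u-x_r)}\sin x_v\sin(x_s-x_r) + e^{i(x_r-x_u)}\sin x_s\sin(x_u-x_v)\Bigr).
\end{equation*}
So $\tfrac12 L_2$, with the prefactor $8/C^3 = 8\sin^3\theta$, becomes
\begin{equation*}
  \tfrac12 L_2 = 64\,e^{2i\Sigma}\,\Phi, \qquad
  \Phi = \bigl(e^{i\delta}A + e^{-i\delta}B\bigr)\bigl(\sin x_r\sin x_v - \sin x_s\sin x_u\bigr),
\end{equation*}
where $\delta = x_u - x_r$, $A = \sin x_v\sin(x_s-x_r)$ and $B = \sin x_s\sin(x_u-x_v)$. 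After dividing by the nonzero factor $64\,e^{2i\Sigma}/\sin^2\theta$, the equation $L_5C^2 - \tfrac12 L_2 = 0$ is equivalent to $R_5 - \sin^2\theta\cdot\Phi = 0$.

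It remains to show that $\Phi = R_2$, and this is the main obstacle. The factor $\Phi$ is a priori complex, while $R_5$ and $R_2$ are real, so there are two things to check.

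First, the imaginary part of $\Phi$ must vanish. That imaginary part is $\sin\delta\,(A-B)(\sin x_r\sin x_v - \sin x_s\sin x_u)$. One would hope the factor $\sin x_r\sin x_v - \sin x_s\sin x_u$ is killed automatically, but it is not. So I would recheck the phases carefully: in particular the conventions $W_l = (Z_l-1)C$ and the ordering $(Z_s-Z_r)$ versus $(Z_u-Z_v)$ in the bracket. I suspect the phase bookkeeping regroups so that the two exponentials combine into the cosines $\cos(x_u-x_r-x_v)$ and $\cos(x_r-x_s-x_u)$ appearing in $R_2$. The terms $\cos x_s$ and $\cos x_v$ in $R_2$ arise from products such as $e^{i\delta}\sin x_r\sin x_v$ being reorganized.

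Second, once reality is established, I would expand the real part, the four products of the form $\cos(\pm\delta)\times(\text{sine product})$, using product-to-sum identities. The goal is to match them against the four terms of $R_2$, using identities such as
\begin{equation*}
  \sin x_s\cos(x_u-x_r) - \sin x_u\cos(x_s-x_r)\cdots
\end{equation*}
If the direct matching fails, the fallback is to treat both sides as Laurent polynomials in $e^{ix_r}, e^{ix_s}, e^{ix_u}, e^{ix_v}$ and compare coefficients. This is a finite, mechanical verification.
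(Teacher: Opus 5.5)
Your overall plan (half-angle exponentials, factor out a common phase, show the imaginary part vanishes) is the paper's, and your $L_5$ computation is correct. However, your computation of $\tfrac12 L_2$ contains two phase errors, and they are exactly why your $\Phi$ has a nonzero imaginary part. The identity $\Phi=R_2$ you need is false for the $\Phi$ you wrote down, and you leave that unresolved, so the proposal does not prove the statement.

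First, $W_rW_v$ and $W_sW_u$ carry different phases, $e^{i(x_r+x_v)}$ and $e^{i(x_s+x_u)}$, and neither is $e^{i\Sigma}$. So you cannot pull out a common phase and be left with the real factor $\sin x_r\sin x_v-\sin x_s\sin x_u$. Correctly,
\[
  W_rW_v - W_sW_u = -\frac{4}{\sin^2\theta}\bigl(e^{i(x_r+x_v)}\sin x_r\sin x_v - e^{i(x_s+x_u)}\sin x_s\sin x_u\bigr).
\]
Second, $(2x_u+x_v+x_s+x_r)-\Sigma = x_u$ and $(2x_r+x_s+x_u+x_v)-\Sigma=x_r$. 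So after factoring out $e^{i\Sigma}$, the bracket leaves $-\frac{4}{\sin\theta}\bigl(e^{ix_u}A+e^{ix_r}B\bigr)$, not $e^{i\delta}A+e^{-i\delta}B$.

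With these fixed, $\tfrac12 L_2 = 64\,e^{2i\Sigma}Q$, where
\[
  Q = \bigl(e^{ix_u}A + e^{ix_r}B\bigr)\bigl(e^{-i(x_s+x_u)}\sin x_r\sin x_v - e^{-i(x_r+x_v)}\sin x_s\sin x_u\bigr).
\]
The four terms of $Q$ carry exactly the phases $e^{-ix_s}$, $e^{i(x_u-x_r-x_v)}$, $e^{i(x_r-x_s-x_u)}$ and $e^{-ix_v}$. Hence $\mathrm{Re}\,Q=R_2$ term by term, directly from the definition of $R_2$; your planned product-to-sum matching of the real part is unnecessary.

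The real content of the proposition is $\mathrm{Im}\,Q=0$, which your proposal never establishes. The paper proves it by grouping the imaginary parts of the first two terms and of the last two terms, and using
\begin{align*}
  \sin x_r\sin x_v + \sin x_u\sin(x_u-x_r-x_v) &= \sin(x_u-x_v)\sin(x_u-x_r),\\
  \sin x_r\sin(x_r-x_s-x_u) + \sin x_s\sin x_u &= \sin(x_u-x_r)\sin(x_s-x_r).
\end{align*}
The two groups then equal $\mp\sin x_s\sin x_v\sin(x_s-x_r)\sin(x_u-x_r)\sin(x_u-x_v)$ and cancel. Only then does dividing by $64e^{2i\Sigma}/\sin^2\theta\neq 0$ give the equivalence.

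Your diagnosis that the phase bookkeeping is at fault is right. Your Laurent-polynomial fallback would also work, but only if you apply it to the true $\tfrac12 L_2$, not to your $\Phi$, which genuinely differs from $R_2$.
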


\begin{proof}
Using $Z_l = e^{i 2x_l}$ and $C = 1/\sin\theta$, we can express $W_l = (Z_l - 1)C$ as
  \begin{equation*}
    W_l = \frac{e^{i 2x_l} - 1}{\sin\theta} = \frac{2i\sin(x_l)}{\sin\theta} e^{ix_l}.
  \end{equation*}
  Let $\Phi = e^{i(r+s+u+v)\theta} = e^{i(x_r+x_s+x_u+x_v)}$.

  First, we compute $L_5 C^2$. By definition,
  \begin{equation*}
    L_5 C^2 = - \frac{W_r W_s W_u W_v}{C^2} (Z_u - Z_r)(Z_v - Z_s),
  \end{equation*}
  and since
  \begin{align*}
    W_r W_s W_u W_v &= \frac{16}{\sin^4\theta} \sin(x_r)\sin(x_s)\sin(x_u)\sin(x_v) \Phi, \\
    (Z_u - Z_r)(Z_v - Z_s) &= -4 \sin(x_u-x_r)\sin(x_v-x_s) \Phi,
  \end{align*}
  multiplying these and dividing by $C^2 = 1/\sin^2\theta$ yields
  \begin{equation*}
    L_5 C^2 = \frac{64}{\sin^2\theta} R_5 \Phi^2.
  \end{equation*}

  Next, we compute $\frac{1}{2} L_2$. By definition,
  \begin{equation*}
    \frac{1}{2} L_2 = \frac{4}{C^3} \big[ Z_u W_v (Z_s - Z_r) 
      + Z_r W_s (Z_u - Z_v) \big] (W_r W_v - W_s W_u)= 64 Q \Phi^2,
  \end{equation*}
  where $Q$ is given by
  \begin{align*}
    Q &= \sin(x_r)\sin^2(x_v)\sin(x_s-x_r) e^{-ix_s} \\
      &\quad - \sin(x_s)\sin(x_u)\sin(x_v)\sin(x_s-x_r) e^{i(x_u-x_r-x_v)} \\
      &\quad + \sin(x_r)\sin(x_s)\sin(x_v)\sin(x_u-x_v) e^{i(x_r-x_s-x_u)} \\
      &\quad - \sin^2(x_s)\sin(x_u)\sin(x_u-x_v) e^{-ix_v}.
  \end{align*}

Substituting the above into the equation $L_5 C^2 - \frac{1}{2} L_2 = 0$ and dividing
  by $64\Phi^2 \neq 0$, we obtain
  \begin{equation*}
    \frac{R_5}{\sin^2\theta} - Q = 0.
  \end{equation*}
  Since $R_5$ and $\sin^2\theta$ are real numbers, for this equation to hold, the
  imaginary part of $Q$ must be identically $0$.
  We compute the imaginary part $\text{Im}(Q)$ by splitting it into four terms 
  $T_1 + T_2 + T_3 + T_4$.
  
  The sum of the first two terms, $T_1 + T_2$, has a common factor
  $-\sin(x_s-x_r)\sin(x_s)\sin(x_v)$, and can be simplified as
  \begin{align*}
    T_1 + T_2 &= -\sin(x_s-x_r)\sin(x_s)\sin(x_v) \big[ \sin(x_r)\sin(x_v) + \sin(x_u)\sin(x_u-x_r-x_v) \big] \\
    &= -\frac{1}{2}\sin(x_s-x_r)\sin(x_s)\sin(x_v) \big[ \cos(x_r-x_v) - \cos(2x_u-x_r-x_v) \big] \\
    &= -\sin(x_s-x_r)\sin(x_s)\sin(x_v)\sin(x_u-x_v)\sin(x_u-x_r).
  \end{align*}

  Similarly, the sum of the latter two terms, $T_3 + T_4$, has a common factor
  $\sin(x_u-x_v)\sin(x_s)\sin(x_v)$, and becomes
  \begin{align*}
    T_3 + T_4 &= \sin(x_u-x_v)\sin(x_s)\sin(x_v) \big[ \sin(x_r)\sin(x_r-x_s-x_u) + \sin(x_s)\sin(x_u) \big] \\
    &= \frac{1}{2}\sin(x_u-x_v)\sin(x_s)\sin(x_v) \big[ \cos(x_s-x_u) - \cos(2x_r-x_s-x_u) \big] \\
    &= \sin(x_u-x_v)\sin(x_s)\sin(x_v)\sin(x_u-x_r)\sin(x_s-x_r).
  \end{align*}
  Therefore, $(T_1 + T_2) + (T_3 + T_4) = 0$, which proves that $\text{Im}(Q) \equiv 0$.
  This implies that $Q$ is purely real, so $Q = \text{Re}(Q) = R_2$.
  Substituting this back and multiplying both sides by $\sin^2\theta$ yields the desired
  equation.
\end{proof}

The equation in Proposition \ref{NRSNUVTRIG} is quite complicated as it is.
However, by cleverly grouping the terms, it can be reduced to a relatively compact
equation involving perfect squares, as shown in the following proposition.

\begin{theorem}\label{NRSNUVTRIG2}
  Assume $\sin\theta \neq 0$ and let $x_l = l\theta$ for $l\in \{r, s, u, v\}$.
  Then the equation in Proposition \ref{NRSNUVTRIG} is equivalent to the perfect square
  equation
  \begin{equation*}
    \alpha^2 + \beta^2 \sin^2\theta = \gamma^2 \left( \frac{1}{4} - \sin^2\theta \right),
  \end{equation*}
  where $\alpha, \beta$, and $\gamma$ are real-valued functions defined by
  \begin{align*}
    \alpha &= \frac{1}{2} \Big( \sin(x_s)\sin(x_v)\sin(x_u-x_r) + \sin(x_r)\sin(x_u)\sin(x_v-x_s) \Big), \\
    \beta &= \sin(x_v)\sin(x_s-x_r)\cos(x_u) - \sin(x_s)\sin(x_v-x_u)\cos(x_r), \\
    \gamma &= \sin(x_s)\sin(x_v)\sin(x_u-x_r) - \sin(x_r)\sin(x_u)\sin(x_v-x_s).
  \end{align*}
\end{theorem}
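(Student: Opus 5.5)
The plan is to separate the identity into a part that is immediate and a single trigonometric identity in four free angles that must be checked. Write
\begin{equation*}
a=\sin(x_s)\sin(x_v)\sin(x_u-x_r),\qquad b=\sin(x_r)\sin(x_u)\sin(x_v-x_s).
\end{equation*}
Then $\alpha=\frac12(a+b)$ and $\gamma=a-b$, so
\begin{equation*}
\alpha^2-\tfrac14\gamma^2=\tfrac14\big((a+b)^2-(a-b)^2\big)=ab .
\end{equation*}
Comparing with the definition in Proposition \ref{NRSNUVTRIG}, $ab=R_5$ exactly. Moving everything in the claimed equation to one side, it reads
\begin{equation*}
\alpha^2-\tfrac14\gamma^2+\sin^2\theta\,(\beta^2+\gamma^2)=0,
\qquad\text{i.e.}\qquad
R_5+\sin^2\theta\,(\beta^2+\gamma^2)=0 .
\end{equation*}
Hence the theorem is equivalent to the statement that \eqref{eq:real_trig_cond} coincides with this equation. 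It therefore suffices to prove the identity
\begin{equation*}
R_2=-(\beta^2+\gamma^2),
\end{equation*}
regarded as an identity in four independent real variables $x_r,x_s,x_u,x_v$. Once it holds, the two equations are literally the same, and $\theta$ plays no further role.

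\textbf{Main step: verifying $R_2+\beta^2+\gamma^2=0$.} This is where all the work lies. I would first try to organize it the way the imaginary part of $Q$ was handled in the proof of Proposition \ref{NRSNUVTRIG}, namely by grouping.

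The first and fourth terms of $R_2$ should pair with the squares $\sin^2(x_v)\sin^2(x_s-x_r)\cos^2(x_u)$ and $\sin^2(x_s)\sin^2(x_v-x_u)\cos^2(x_r)$ coming from $\beta^2$. The middle two terms, which carry $\cos(x_u-x_r-x_v)$ and $\cos(x_r-x_s-x_u)$, should pair with the cross term $-2\sin(x_v)\sin(x_s)\sin(x_s-x_r)\sin(x_v-x_u)\cos(x_u)\cos(x_r)$ of $\beta^2$ and with $\gamma^2=a^2-2ab+b^2$. The factors $\cos(x_u-x_r-x_v)$ and $\cos(x_r-x_s-x_u)$ are expanded by the addition formula so that everything is expressed through sines and cosines of $x_r,x_s,x_u,x_v$ and of the differences $x_s-x_r$, $x_u-x_v$, $x_u-x_r$, $x_v-x_s$.

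If hand grouping becomes unwieldy, I would fall back on a mechanical check. Substitute $e^{ix_l}=t_l$, multiply through by a suitable monomial to obtain a Laurent polynomial identity in $t_r,t_s,t_u,t_v$, and compare coefficients. Both sides have degree at most $2$ in each $t_l^{\pm1}$, so it is enough to verify agreement at finitely many values in each variable. For instance, treat the expression as a trigonometric polynomial of degree $\le 2$ in $x_u$ and check five values of $x_u$, such as $x_u=0$, $x_u=x_r$, $x_u=x_v$ and two others, at each of which many terms vanish. Each of these reduces the verification to a three-variable identity, which is handled in the same way recursively.

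\textbf{Conclusion.} Given the identity, \eqref{eq:real_trig_cond} becomes $R_5+\sin^2\theta(\beta^2+\gamma^2)=0$. Substituting $R_5=\alpha^2-\frac14\gamma^2$ and rearranging gives exactly
\begin{equation*}
\alpha^2+\beta^2\sin^2\theta=\gamma^2\left(\tfrac14-\sin^2\theta\right),
\end{equation*}
and every step is reversible, so the two equations are equivalent. The expected obstacle is only the bookkeeping in the four-variable identity. The specialization strategy limits the risk, since each reduction cuts the number of free variables by one.
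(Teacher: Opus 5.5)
Your reduction is the same as the paper's. Both write $R_5=ab=\alpha^2-\frac14\gamma^2$ and reduce the theorem to the single identity $-R_2=\beta^2+\gamma^2$ in four free angles. You differ only in how that identity is proved.

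The paper sets $A=\sin(x_v)\sin(x_s-x_r)$ and $B=\sin(x_s)\sin(x_v-x_u)$. It regroups $-R_2=AX+BY$ and checks by addition formulas that $X=A-B\cos(x_u-x_r)$ and $Y=B-A\cos(x_u-x_r)$. This gives $-R_2=|Ae^{ix_u}-Be^{ix_r}|^2$. The real part of $Ae^{ix_u}-Be^{ix_r}$ is $\beta$, and its imaginary part $A\sin(x_u)-B\sin(x_r)$ equals $\gamma$ by the Ptolemy identity of Lemma~\ref{GESVSXSXRSXUMSXSSXVMXUSXR}. That explains why $-R_2$ is a sum of two squares and where $\beta,\gamma$ come from; your route only certifies the identity for the given $\beta,\gamma$.

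The term pairing you sketch first does not split $F=R_2+\beta^2+\gamma^2$ into separately vanishing pieces. At $x_u=0$, $x_r=\pi/4$, $x_s=\pi/2$, your first group (first and fourth terms of $R_2$ plus the two squares from $\beta^2$) equals $\sin^2(x_v)$. So the interpolation is the version to carry out.

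As written, that step is still only a plan, but it closes quickly.
\begin{itemize}
\item \textbf{Parity.} Every term of $F$ is either free of $x_u$ or a product of two factors of frequency $\pm1$ in $x_u$. Hence $F=c_0+c_1\cos 2x_u+c_2\sin 2x_u$, and three values of $x_u$ distinct modulo $\pi$ suffice. Five points and recursion are unnecessary.
\item \textbf{At $x_u=0$.} One finds $R_2=-\sin^2(x_r)\sin^2(x_v)$, $\beta=-\sin(x_r)\cos(x_s)\sin(x_v)$, and $\gamma=-\sin(x_r)\sin(x_s)\sin(x_v)$.
\item \textbf{At $x_u=x_v$.} One finds $R_2=-\sin^2(x_v)\sin^2(x_s-x_r)$, $\beta=\sin(x_v)\cos(x_v)\sin(x_s-x_r)$, and $\gamma=\sin^2(x_v)\sin(x_s-x_r)$.
\item \textbf{At $x_u=x_r$.} One finds $R_2=-\sin^2(x_r)\sin^2(x_v-x_s)$, $\beta=-\sin(x_r)\cos(x_r)\sin(x_v-x_s)$, and $\gamma=-\sin^2(x_r)\sin(x_v-x_s)$.
\end{itemize}
In each case $\beta^2+\gamma^2=-R_2$. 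The last two cases use the three-term identity $\sin a\sin(b-c)+\sin b\sin(c-a)+\sin c\sin(a-b)=0$, which is the same Ptolemy relation the paper needs. Hence $F=0$ whenever $0,x_r,x_v$ are pairwise distinct modulo $\pi$, and everywhere by continuity.

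Completed this way, your argument is shorter than the paper's and needs no guessed $A,B$. The paper's version buys the geometric squared-modulus picture.
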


Before proving Proposition \ref{NRSNUVTRIG2}, we prepare a lemma.

\begin{lemma}\label{GESVSXSXRSXUMSXSSXVMXUSXR}
  Assume $\sin\theta \neq 0$ and let $x_l = l\theta$ for $l\in \{r, s, u, v\}$.
  Then we have
  \begin{equation*}
    \gamma = \sin(x_v)\sin(x_s-x_r)\sin(x_u) - \sin(x_s)\sin(x_v-x_u)\sin(x_r).
  \end{equation*}
\end{lemma}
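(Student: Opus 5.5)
The plan is to show that the difference of the two expressions for $\gamma$ vanishes identically, using only the three-term sine identity
\begin{equation*}
  \sin a\,\sin(b-c)+\sin b\,\sin(c-a)+\sin c\,\sin(a-b)=0,
\end{equation*}
valid for all real $a,b,c$. I would verify this identity first, either by converting each product to $\frac12[\cos(\cdot)-\cos(\cdot)]$ and observing that the six cosines cancel in pairs, or by writing $\sin t=(e^{it}-e^{-it})/(2i)$ and expanding. Note that the hypothesis $\sin\theta\neq 0$ plays no role: the claim is a pure trigonometric identity in the four real variables $x_r,x_s,x_u,x_v$.

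Next, subtract the right-hand side of the lemma from the definition of $\gamma$ in Theorem \ref{NRSNUVTRIG2}. Group the four resulting terms according to whether they carry the factor $\sin(x_s)$ or $\sin(x_u)$:
\begin{align*}
  &\sin(x_s)\big[\sin(x_v)\sin(x_u-x_r)+\sin(x_r)\sin(x_v-x_u)\big] \\
  &\qquad -\sin(x_u)\big[\sin(x_r)\sin(x_v-x_s)+\sin(x_v)\sin(x_s-x_r)\big].
\end{align*}

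Then apply the three-term identity to each bracket.
\begin{itemize}
  \item For the first bracket, take $(a,b,c)=(x_v,x_u,x_r)$. The identity shows that the bracket equals $-\sin(x_u)\sin(x_r-x_v)=\sin(x_u)\sin(x_v-x_r)$.
  \item For the second bracket, take $(a,b,c)=(x_r,x_v,x_s)$. The identity shows that the bracket equals $-\sin(x_s)\sin(x_r-x_v)=\sin(x_s)\sin(x_v-x_r)$.
\end{itemize}
The difference therefore becomes
\begin{equation*}
  \sin(x_s)\sin(x_u)\sin(x_v-x_r)-\sin(x_u)\sin(x_s)\sin(x_v-x_r)=0,
\end{equation*}
which proves the lemma.

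There is no real obstacle here. The only step needing care is choosing the right grouping and matching the sign conventions in the arguments $(b-c)$, $(c-a)$, $(a-b)$ for each application of the three-term identity. If the grouping above were to misbehave, I would fall back to expanding everything in exponentials $e^{\pm ix_l}$ and checking that all monomials cancel.
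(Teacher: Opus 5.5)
Your proof is correct, and it uses the same underlying identity as the paper. The paper's two Ptolemy relations, for the triples $(x_r,x_s,x_u)$ and $(x_s,x_u,x_v)$, are exactly instances of your three-term identity $\sin a\sin(b-c)+\sin b\sin(c-a)+\sin c\sin(a-b)=0$; this is the Pl\"ucker relation mentioned in the paper's remark.

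The two proofs differ in how that identity is obtained and where it is applied.

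\begin{itemize}
  \item The paper derives the identity geometrically from Ptolemy's theorem for chords of the unit circle. Ptolemy's theorem needs an ordered cyclic configuration, so the paper first restricts to $0<x_r<x_s<x_u<x_v<\pi$ and then extends via the identity theorem for real-analytic functions. It substitutes the first relation into the first term of $\gamma$ and the second into the leftover bracket.
  \item You verify the identity by product-to-sum for all real arguments, so no restriction or analytic continuation is needed. You group the difference by $\sin x_s$ and $\sin x_u$ and apply the identity to the triples $(x_v,x_u,x_r)$ and $(x_r,x_v,x_s)$. This gives a clean symmetric cancellation.
\end{itemize}

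Your route is shorter and fully self-contained. The paper's route buys the geometric interpretation of $\sin(x_j-x_i)$ as a chord length, equivalently a $2\times 2$ determinant, which it uses to motivate the connection with configuration spaces of the hypergeometric equation.
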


\begin{proof}
  Since both sides of the equation in the lemma are analytic functions of the four
  variables $x_r, x_s, x_u$, and $x_v$, the identity theorem for analytic functions
  of several variables guarantees that if the identity holds in the non-empty open
  domain $0 < x_r < x_s < x_u < x_v < \pi$ in $\mathbb{R}^4$, it holds identically
  for all real values. Thus, without loss of generality, we may assume this condition.

  Consider a circle of radius $1$, and take a reference point $O$ (argument $0$) and four
  points $R, S, U, V$ on the circle with arguments $2x_r, 2x_s, 2x_u, 2x_v$, respectively.
  Then the length of the chord connecting two points $A$ (argument $2\phi$) and $B$
  (argument $2\psi$) on the circle is given by $AB = 2\sin|\phi-\psi|$. In particular, the
  length of the chord connecting the reference point $O$ and a point $L \in \{R, S, U, V\}$
  is $OL = 2\sin(x_l)$.
  
  First, since the four points $O, R, S, U$ lie on the circle in this order, applying
  Ptolemy's theorem to the cyclic quadrilateral with these points as vertices yields
  \begin{equation}
    \sin(x_s)\sin(x_u-x_r) = \sin(x_r)\sin(x_u-x_s) + \sin(x_u)\sin(x_s-x_r). \label{EQPTOLEMY1}
  \end{equation}
  Next, by similarly applying Ptolemy's theorem to the cyclic quadrilateral with vertices
  $O, S, U, V$, we have
  \begin{equation}
    \sin(x_u)\sin(x_v-x_s) = \sin(x_s)\sin(x_v-x_u) + \sin(x_v)\sin(x_u-x_s). \label{EQPTOLEMY2}
  \end{equation}

  Substituting \eqref{EQPTOLEMY1} into the first term of $\gamma$ gives
  \begin{equation*}
    \sin(x_v)\sin(x_s-x_r)\sin(x_u) + \sin(x_r) \Big[ \sin(x_v)\sin(x_u-x_s) - \sin(x_u)\sin(x_v-x_s) \Big],
  \end{equation*}
  and finally, applying \eqref{EQPTOLEMY2} transforms this into the right-hand side of the
  lemma.
\end{proof}

\begin{proof}[proof of theo. \ref{NRSNUVTRIG2}]
  It suffices to show this for the equation $R_5 - \sin^2\theta \cdot R_2 = 0$ in
  Proposition \ref{NRSNUVTRIG}.

  Let $P_1$ and $P_2$ be defined by
  \begin{align*}
    P_1 &= \sin(x_s)\sin(x_v)\sin(x_u-x_r), \\
    P_2 &= \sin(x_r)\sin(x_u)\sin(x_v-x_s).
  \end{align*}
  Then $R_5 = P_1 P_2$. By setting $\alpha = \frac{1}{2}(P_1 + P_2)$ and $\gamma = P_1 - P_2$,
  we obtain the completed square identity:
  \begin{equation*}
    \alpha^2 - \frac{1}{4}\gamma^2 = \frac{1}{4}(P_1 + P_2)^2 - \frac{1}{4}(P_1 - P_2)^2 = P_1 P_2 = R_5.
  \end{equation*}
  This yields $\alpha$ and $\gamma$ as stated in the theorem.

  Next, let $A = \sin(x_v)\sin(x_s-x_r)$ and $B = \sin(x_s)\sin(x_v-x_u)$. We will show that
  \begin{equation}\label{NRSNUVTRIG2TREQ}
    -R_2 = A^2 + B^2 - 2AB\cos(x_u-x_r).
  \end{equation}
  From the definition in Proposition \ref{NRSNUVTRIG}, we have
  \begin{align*}
    -R_2 &= A\left(-\sin(x_r)\cos(x_s)\sin(x_v) + \sin(x_s)\sin(x_u)\cos(x_u - x_r - x_v)\right) \\
      &\quad + B\left(\sin(x_r)\sin(x_v)\cos(x_u - x_r + x_s) - \sin(x_s)\sin(x_u)\cos(x_v)\right).
  \end{align*}
  Thus, it is enough to show the following identities hold:
  \begin{align*}
    A - B\cos(x_u-x_r) &= -\sin(x_r)\cos(x_s)\sin(x_v) + \sin(x_s)\sin(x_u)\cos(x_u - x_r - x_v), \\
    B - A\cos(x_u-x_r) &= \sin(x_r)\sin(x_v)\cos(x_u - x_r + x_s) - \sin(x_s)\sin(x_u)\cos(x_v).
  \end{align*}
  Substituting the expansions of $A$ and $B$ using the addition formulas for trigonometric
  functions, we get
  \begin{align*}
    A &- B\cos(x_u-x_r) \\
    &= -\sin(x_r)\cos(x_s)\sin(x_v) \\
    &\quad + \sin(x_s) \Big( \sin(x_v)\big[ \cos(x_r) - \cos(x_u)\cos(x_u-x_r) \big]
      + \sin(x_u)\cos(x_v)\cos(x_u-x_r) \Big).
  \end{align*}
  Here, viewing the angle $x_r$ as $x_u - (x_u - x_r)$ and applying the addition formula,
  we have
  \begin{equation*}
    \cos(x_r) = \cos\big(x_u - (x_u-x_r)\big) = \cos(x_u)\cos(x_u-x_r) + \sin(x_u)\sin(x_u-x_r).
  \end{equation*}
  Therefore,
  \begin{align*}
    A &- B\cos(x_u-x_r) \\
    &= -\sin(x_r)\cos(x_s)\sin(x_v) \\
    &\quad + \sin(x_s)\sin(x_u) \Big( \sin(x_u-x_r)\sin(x_v) + \cos(x_u-x_r)\cos(x_v) \Big)\\
    &= -\sin(x_r)\cos(x_s)\sin(x_v) + \sin(x_s)\sin(x_u)\cos(x_u-x_r-x_v).
  \end{align*}
  A similar calculation for $B - A\cos(x_u-x_r)$ confirms that the identities hold.

  Observing the right-hand side of equation \eqref{NRSNUVTRIG2TREQ}, we can interpret it
  as the squared norm of the difference between two vectors in the Euclidean plane.
  Specifically, if we define the vectors $\vec{v}_1 = (A\cos(x_u), A\sin(x_u))$ and 
  $\vec{v}_2 = (B\cos(x_r), B\sin(x_r))$, we have
  \begin{equation*}
    \|\vec{v}_1 - \vec{v}_2\|^2 = A^2 + B^2 - 2AB\cos(x_u-x_r) = -R_2.
  \end{equation*}
  Substituting $A$ and $B$, we find $-R_2 = \beta^2 + (\gamma')^2$.

  Finally, since Lemma \ref{GESVSXSXRSXUMSXSSXVMXUSXR} guarantees that $\gamma' = \gamma$,
  we obtain $-R_2 = \beta^2 + \gamma^2$.
  Combining this with the initial equations $\alpha^2 - \frac{1}{4}\gamma^2 = R_5$ and
  $R_5 - \sin^2\theta \cdot R_2 = 0$, we obtain
  \begin{equation*}
    \alpha^2 - \frac{1}{4}\gamma^2 + \sin^2\theta (\beta^2 + \gamma^2) = 0,
  \end{equation*}
  which, upon rearranging, yields the desired perfect square equation.
\end{proof}

\begin{remark}
  In the proof of Lemma \ref{GESVSXSXRSXUMSXSSXVMXUSXR}, we derived trigonometric
  identities by applying Ptolemy's theorem to the configuration of four points on a circle
  and their chord lengths. This fact is not merely an elementary geometric trick; it
  strongly suggests the involvement of the underlying theory of the configuration space of
  the hypergeometric differential equation. To be more precise, if we identify the points
  on the circle with homogeneous coordinates $\mathbf{v}_l = (\cos x_l, \sin x_l)^\mathrm{T} \in \mathbb{R}^2$,
  the chord length $\sin(x_j-x_i)$ corresponds exactly to the determinant $\det(\mathbf{v}_i, \mathbf{v}_j)$.
  Ptolemy's identity is then nothing but the Pl\"ucker relation on the Grassmannian
  $Gr(2,4)$ satisfied by the coordinates of that configuration space. 
  
In this paper, we derive the connection matrices by rewriting the KZ equation into
  Gauss's hypergeometric equation, which is geometrically formulated on the configuration
  space of four points on the projective line $\mathbb{P}^1$. Although the KZ equation
  is a canonical example of modular differential equations, the appearance of such an
  algebraic structure raises an interesting, albeit speculative, question: whether
  the general theory of hypergeometric functions and configuration spaces---developed by
  Gelfand, Aomoto, Yoshida, and others---might one day provide a framework for exploring
  higher-order or multivariable modular differential equations.
\end{remark}

Let us find the solutions to the perfect square equation given in theorem
\ref{NRSNUVTRIG2}. First, it is easy to see that there are no solutions in the region
$\sin^2\theta > \frac{1}{4}$.

\begin{lemma}\label{NRSNUVTRIEQNOSOLLEM}
  Assume $\sin\theta \neq 0$ and let $x_l = l\theta$ for $l\in \{r, s, u, v\}$.
  The equation
  \begin{equation*}
    \alpha^2 + \beta^2 \sin^2\theta = \gamma^2 \left( \frac{1}{4} - \sin^2\theta \right)
  \end{equation*}
  has only the trivial solution $\alpha = \beta = \gamma = 0$ in the region
  $\sin^2\theta > \frac{1}{4}$.
\end{lemma}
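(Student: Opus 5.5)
The plan is a sign argument. Everything in the equation is real: $\alpha,\beta,\gamma$ are real because each $x_l=l\theta$ is real, and $\sin^2\theta>0$ by the assumption $\sin\theta\neq 0$. So I will compare the signs of the two sides.

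First, the left-hand side $\alpha^2+\beta^2\sin^2\theta$ is a sum of two nonnegative terms, hence $\ge 0$. Second, the right-hand side $\gamma^2\bigl(\tfrac14-\sin^2\theta\bigr)$ is the product of $\gamma^2\ge 0$ with a factor that is strictly negative when $\sin^2\theta>\tfrac14$. Hence the right-hand side is $\le 0$ in that region.

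Equality therefore forces both sides to vanish. From the right-hand side, $\gamma^2\bigl(\tfrac14-\sin^2\theta\bigr)=0$ with $\tfrac14-\sin^2\theta\neq0$ gives $\gamma=0$. From the left-hand side, $\alpha^2+\beta^2\sin^2\theta=0$ is a sum of two nonnegative terms, so each term is zero. This gives $\alpha=0$, and since $\sin^2\theta\neq 0$ it also gives $\beta=0$.

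Conversely, $\alpha=\beta=\gamma=0$ clearly satisfies the equation, so it is the only solution in this region. I do not expect a genuine obstacle. The one point to state explicitly is that $\sin^2\theta\neq0$ is needed to pass from $\beta^2\sin^2\theta=0$ to $\beta=0$, and this is supplied by the hypothesis $\sin\theta\neq0$.
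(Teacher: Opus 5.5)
Your proposal is correct and follows essentially the same route as the paper: the left-hand side is nonnegative and the right-hand side is nonpositive when $\sin^2\theta > \frac{1}{4}$, so both vanish. Then $\frac{1}{4}-\sin^2\theta\neq 0$ forces $\gamma=0$, and $\sin\theta\neq 0$ forces $\alpha=\beta=0$.
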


\begin{proof}
  Since the variables $\alpha, \beta, \gamma$ are constructed by sums and products of real
  functions (sine and cosine functions) with respect to the real variable $\theta$, they
  are all real numbers. Thus, their squares $\alpha^2, \beta^2, \gamma^2$ are always
  non-negative real numbers. Observing that the left-hand side of the equation is
  non-negative while the right-hand side is non-positive, it follows that this equation
  holds only when both sides are $0$.
  
  Since $1/4-\sin^2\theta \neq 0$ by assumption, for the right-hand side to be $0$, we must
  have $\gamma=0$. Similarly, for the left-hand side, $\alpha^2 + \beta^2 \sin^2\theta = 0$
  and $\sin\theta \neq 0$ imply that $\alpha = \beta = 0$.
\end{proof}

By utilizing Lemma \ref{NRSNUVTRIEQNOSOLLEM} and Galois theory, we obtain the following
theorem.

\begin{theorem}\label{NTSA2PB2S2EQG214S2}
  Assume $\sin\theta \neq 0$ and let $x_l = l\theta$ for $l\in \{r, s, u, v\}$.
  Unless $\alpha = \beta = \gamma = 0$, or both $\alpha=\beta=0$ and $\sin^2\theta=1/4$
  hold, the equation with real coefficients
  \begin{equation*}
    \alpha^2 + \beta^2 \sin^2\theta = \gamma^2 \left( \frac{1}{4} - \sin^2\theta \right)
  \end{equation*}
  has no solutions. 
\end{theorem}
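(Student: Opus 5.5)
The plan is to reduce to Lemma \ref{NRSNUVTRIEQNOSOLLEM} by a Galois conjugation that moves $\theta$ into the region $\sin^2\theta > \frac14$. First I dispose of the two easy regimes.

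\emph{Case $\sin^2\theta = \frac14$.} The right-hand side vanishes, so $\alpha^2+\frac14\beta^2=0$. Since $\alpha,\beta$ are real, this forces $\alpha=\beta=0$, which is the second exceptional alternative of the statement.

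\emph{Case $\sin^2\theta>\frac14$.} This is exactly Lemma \ref{NRSNUVTRIEQNOSOLLEM}, which gives $\alpha=\beta=\gamma=0$.

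It remains to treat $0<\sin^2\theta<\frac14$. Here $k\in\mathbb{Q}$, so $\theta=\frac{n}{m}\pi$ with $\gcd(n,m)=1$. Put $\zeta=e^{i\theta}$, a root of unity of order $d\in\{m,2m\}$. Then $\sin(l\theta)=(\zeta^l-\zeta^{-l})/2i$ and $\cos(l\theta)=(\zeta^l+\zeta^{-l})/2$.

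\emph{Step 1 (the equation lives in $\mathbb{Q}(\zeta)$).} Each of $\alpha$ and $\gamma$ is a sum of products of three sines, and $\beta$ is a sum of products of two sines and a cosine. Hence $\alpha^2$, $\beta^2$, $\gamma^2$ and $\sin^2\theta=(2-\zeta^2-\zeta^{-2})/4$ are Laurent polynomials in $\zeta$ with \emph{rational} coefficients, because the powers of $i$ come in even numbers. Thus the equation
\[
\Delta(\theta):=\alpha^2+\beta^2\sin^2\theta-\gamma^2\left(\tfrac14-\sin^2\theta\right)=0
\]
is an identity $P(\zeta)=0$ for a fixed $P\in\mathbb{Q}[X,X^{-1}]$. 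This $P$ depends only on $r,s,u,v$, and the same $P$ satisfies $P(e^{i\theta'})=\Delta(\theta')$ for every real $\theta'$.

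\emph{Step 2 (Galois action).} For any $j$ coprime to $d$, the automorphism $\sigma_j\colon\zeta\mapsto\zeta^j$ of $\mathbb{Q}(\zeta)$ gives $0=\sigma_j(P(\zeta))=P(\zeta^j)=\Delta(j\theta)$. So the same equation holds at $\theta'=j\theta$, with $x_l$ replaced by $l\theta'$. Moreover $\sigma_j(\alpha^2)=\alpha(\theta')^2$, and similarly for $\beta^2$ and $\gamma^2$. If $j$ can be chosen so that $\sin^2(j\theta)>\frac14$, Lemma \ref{NRSNUVTRIEQNOSOLLEM} gives $\alpha(\theta')^2=\beta(\theta')^2=\gamma(\theta')^2=0$. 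Since $\sigma_j$ is injective, this yields $\alpha^2=\beta^2=\gamma^2=0$ at $\theta$, so $\alpha=\beta=\gamma=0$.

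\emph{Step 3 (choosing $j$; the main obstacle).} The conjugates of $\zeta$ are exactly the primitive $d$-th roots of unity. It therefore suffices to find an integer $a$ coprime to $d$ with $a/d$ at distance more than $\frac16$ from $\frac12\mathbb{Z}$, i.e. $\sin^2(\pi a\cdot 2/d)$ or the appropriate normalization exceeds $\frac14$. Then take $j\equiv a n'^{-1}$, where $\zeta=e^{2\pi i n'/d}$. In terms of $\theta'/\pi=a'/m$, this means $a'/m \bmod 1$ must lie in $(\frac16,\frac56)$, with $a'$ ranging over the admissible residues. I would choose $a'$ near $m/2$ as follows.
\begin{itemize}
\item If $m$ is odd, take $a'=\frac{m\pm1}{2}$. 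Both values are coprime to $m$, since twice them equals $m\pm1$. One of the two is odd, which handles coprimality with $2$ when $d=2m$. Also $|a'/m-\frac12|=\frac1{2m}<\frac13$ for all $m\ge 3$.
\item If $m=2m'$ is even, take $a'=m'\pm1$ when $m'$ is even, or $a'=m'\pm2$ when $m'$ is odd. These are odd, and their gcd with $2m'$ is $1$. Moreover $|a'/m-\frac12|\le \frac1{m'}<\frac13$ once $m\ge 8$.
\end{itemize}
The finitely many leftover values $m\in\{1,2,4,6\}$ are checked directly. For $m=1$ we have $\sin\theta=0$, which is excluded. For $m=2,4$ we get $\sin^2\theta\in\{1,\frac12\}$, already $>\frac14$. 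For $m=6$ every admissible angle has $\sin^2\theta=\frac14$, which is the first case. The point to verify carefully is that the residues $a'$ above really lie in the Galois orbit of $\zeta$, i.e. that they have the correct order $d$ whether $d=m$ or $d=2m$. Once this is done, every case ends in one of the two alternatives of the theorem.
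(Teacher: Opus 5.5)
Your argument is correct. It follows the paper's overall strategy: move $\theta$ by a Galois automorphism into the region $\sin^2\theta>\frac{1}{4}$, then apply Lemma~\ref{NRSNUVTRIEQNOSOLLEM}. The difference is in how you produce the required conjugate.

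\textbf{The paper's route.} It argues non-constructively with a norm. The number $a=4\sin^2\theta=2-\zeta-\zeta^{-1}$ (with $\zeta=e^{2i\theta}$) is a nonzero algebraic integer whose conjugates are all positive. If $a<1$ and every conjugate were $\le 1$, its norm would lie in $(0,1)$, which is impossible. This is uniform in $m$ and needs no case analysis.

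\textbf{Your route.} You exhibit an explicit conjugate angle $a'\pi/m$ with $a'$ near $m/2$. This costs parity bookkeeping and a direct check of $m\in\{1,2,4,6\}$. In exchange you get a concrete conjugate, with $\sin^2$ near $1$ for large $m$.

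\textbf{Where your Step~1 is more careful.} You write $\alpha^2,\beta^2,\gamma^2,\sin^2\theta$ as rational Laurent polynomials in $e^{i\theta}$. Then $\sigma_j$ acts as $\theta\mapsto j\theta$, so the conjugated quantities are visibly squares of real numbers. The paper instead asserts that $\alpha,\beta,\gamma,\sin\theta$ lie in $\mathbb{Q}(\zeta_N)^+$. That is not literally true, since $\sin(\pi/3)\notin\mathbb{Q}(\zeta_3)$, and it must be repaired by enlarging the field or passing to squares. You also dispose of the cases $\sin^2\theta\ge\frac{1}{4}$ explicitly, which the paper leaves implicit.

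\textbf{Two small points to tidy up.}
\begin{enumerate}
\item Your first normalization is not the right equivalence. For $\xi=e^{2\pi i a/d}$, the condition $\sin^2>\frac{1}{4}$ means $a/d$ is at distance more than $\frac{1}{12}$ from $\frac{1}{2}\mathbb{Z}$, not $\frac{1}{6}$. This is harmless, because the criterion $a'/m \bmod 1\in(\frac{1}{6},\frac{5}{6})$ that you actually use is the correct one.
\item The point you flagged closes immediately. For $\gcd(a',m)=1$, the root $e^{i\pi a'/m}$ has order $2m$ if $a'$ is odd and order $m$ if $a'$ is even.
\begin{itemize}
\item When $d=m$ (i.e.\ $n$ even, $m$ odd), take the even one of $(m\pm1)/2$. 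Alternatively, replace $a'$ by $a'+m$, which leaves $\sin^2$ unchanged.
\item For even $m$ one always has $d=2m$. Your choices $m'\pm1$ or $m'\pm2$ are then odd and coprime to $m$, as required.
\end{itemize}
\end{enumerate}
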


\begin{proof}
  Suppose there exists a non-trivial solution $(\alpha, \beta, \gamma) \neq (0, 0, 0)$
  for some rational angle $\theta = \frac{M}{N}\pi$ (with $M, N \in \mathbb{Z}$ coprime and $N > 0$) 
  satisfying $0 < \sin^2\theta < \frac{1}{4}$, which corresponds to the matrices being non-commutative.

  Let $K = \mathbb{Q}(\zeta_N)$ be the cyclotomic field generated by $\zeta_N = e^{i 2\pi/N}$. 
  Since $\alpha, \beta, \gamma$, and $\sin\theta$ are all expressed as polynomials in the
  sines and cosines of rational angles, they are algebraic numbers belonging to the real
  subfield $K^+ = K \cap \mathbb{R}$. Because the equation in the proposition is an 
  algebraic relation with rational coefficients, it is invariant under the action of any 
  element (Galois automorphism) $\sigma$ in the Galois group $\mathrm{Gal}(K/\mathbb{Q})$.

  Let $\zeta = e^{2i\theta} = \zeta_N^M \in K$. Thus, we can define a number $a$ as follows:
  \begin{equation*}
    a = 2 - \zeta - \zeta^{-1} = 2 - 2\cos(2\theta) = 4\sin^2\theta.
  \end{equation*}
  Since $\zeta$ is a root of unity, it is an algebraic integer, which implies that $a$ 
  is also a non-zero algebraic integer. 
  Every Galois conjugate of $a$ has the form $2 - \xi - \xi^{-1}$ where $\xi$ is a root of 
  unity of the same order as $\zeta \neq 1$. Since $2 - \xi - \xi^{-1} = 4\sin^2(\arg(\xi)/2) > 0$, 
  all conjugates of $a$ are strictly positive real numbers. 
  
  If all the conjugates of $a$ were less than or equal to $1$, the norm $N_{\mathbb{Q}(a)/\mathbb{Q}}(a)$, 
  which is the product of all its conjugates including $a$ itself (where $a = 4\sin^2\theta < 1$ by assumption), 
  would satisfy $0 < N_{\mathbb{Q}(a)/\mathbb{Q}}(a) < 1$. 
  However, since $a$ is an algebraic integer, its norm must be a rational integer, which is 
  a contradiction. 
  Therefore, there must exist some conjugate of $a$ that is strictly greater than $1$. 
  Extending the corresponding embedding to the cyclotomic field $K$, we conclude that there 
  exists a Galois automorphism $\sigma \in \mathrm{Gal}(K/\mathbb{Q})$ such that the 
  conjugate mapped by $\sigma$ satisfies
  \begin{equation*}
    \sigma(a) = \sigma(4\sin^2\theta) > 1 \implies \sigma(\sin^2\theta) = \sin^2\theta^* > \frac{1}{4}.
  \end{equation*}

  We act with this automorphism $\sigma$ on both sides of the equation:
  \begin{equation*}
    \sigma(\alpha)^2 + \sigma(\beta)^2 \sin^2\theta^*
      = \sigma(\gamma)^2 \left( \frac{1}{4} - \sin^2\theta^* \right).
  \end{equation*}
  Since $\alpha, \beta, \gamma$ are in the real subfield $K^+$, their conjugates
  $\sigma(\alpha), \sigma(\beta), \sigma(\gamma)$ are also real numbers. Consequently, the
  solutions to this equation are restricted to those satisfying
  $\sigma(\alpha) = \sigma(\beta) = \sigma(\gamma) = 0$. Since $\sigma$ is an isomorphism,
  this is equivalent to $\alpha = \beta = \gamma = 0$.
\end{proof}

In the next section, we synthesize the results obtained thus far to prove the main theorem
of this paper.

\section{Complete Classification of Modular Form Solutions}\label{KZMSC}

The goal of this section is to prove our main theorem concerning the modular form
solutions to the KZ equation. It should be noted that in this paper, we use the term
``modular form'' in a relatively broad sense. First, we allow poles in the upper
half-plane and at the cusps. Second, the weight $k$ can be any rational number. Third,
we admit non-trivial multiplier systems. Finally, since the KZ equation is a second-order
linear differential equation, we can take two linearly independent solutions; however,
these two solutions are not required to share the same multiplier system.

\begin{theorem}[Main Theorem]\label{MAINTHEOREM}
  Let $k \in \mathbb{Q}$. Then, the KZ equation of weight $k$ admits a spaces of
  modular form solutions of exactly the following dimensions:
  \begin{enumerate}
    \item One-dimensional when $k=-1$ or $k\equiv 0, 4\pmod{6}$.
    \item Two-dimensional when $k\equiv 1, 2, 3\pmod{6}$, $k \equiv 1/2\pmod{3}$, or 
          $k=(6n+1)/5\ (n\not\equiv 4\pmod{5})$.
    \item Zero-dimensional otherwise.
  \end{enumerate}
\end{theorem}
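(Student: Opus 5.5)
We split the proof into an existence half and a non-existence half; only the latter uses the algebraic results of Sections \ref{PTC} and \ref{PTCAC}. For existence we invoke the known constructions. Kaneko--Koike \cite{KK} give explicit solutions for the integer and half-integer weights in (1) and (2). For $k\equiv 0,4\pmod 6$ one solution is a genuine modular form, and the second solution involves a logarithm, because the local exponents at $i\infty$ differ by an integer; this gives dimension one, and the same happens for $k=-1$. Kaneko \cite{K} gives the $\Gamma(5)$ basis for $k=(6n+1)/5$, $n\not\equiv4\pmod5$. Upper bounds in cases (1) and (2) are automatic: in case (1) the logarithmic solution is not modular, and in case (2) the solution space is two-dimensional.

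For non-existence, take the remaining $k$ and write $(k+1)/6=n/m$ in lowest terms. Suppose some nonzero solution $f$ is modular, with a multiplier system, on a finite index subgroup. Then it is modular on some $\Gamma(N)$ after shrinking: $\Gamma(N)$ lies in the intersection with a congruence subgroup when the subgroup is congruence, and otherwise we reduce to the relevant finite index subgroup containing the elements $L(r,s)$. Under the transformation law of Theorem \ref{THEXFN}, such an $f$ is a common eigenvector of all $N(r,s)$ with $L(r,s)\in\Gamma(N)$. In particular, any two such matrices are simultaneously triangularizable. We choose pairs $(r,s),(u,v)$ with $rs-2$ and $uv-2$ divisible by $N$, for instance $r=2+N$, $s=1+N$ or similar, together with a second pair. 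By Theorem \ref{LRSLUVIGNANDNCNRSNUV}, applicable outside the excluded weights, we can take them with $N(r,s)$ and $N(u,v)$ non-commuting, so neither is scalar and conditions (1) and (2) of Theorem \ref{NRSNUVTS} fail. Condition (3) corresponds to $k\equiv 6,10\pmod{12}$, which is already covered by existence case (2) or case (1). Hence condition (4) must hold. By Theorem \ref{NRSNUVTRIG2} it becomes $\alpha^2+\beta^2\sin^2\theta=\gamma^2(1/4-\sin^2\theta)$. By Theorem \ref{NTSA2PB2S2EQG214S2} and Lemma \ref{NRSNUVTRIEQNOSOLLEM}, this forces $\alpha=\beta=\gamma=0$, or $\alpha=\beta=0$ with $\sin^2\theta=1/4$. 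The latter means $m=6$ or its relatives, which lies in the excluded set.

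It remains to show that $\alpha=\beta=\gamma=0$ contradicts non-commutativity. We compare with Proposition \ref{NRSNUVCM}. Under $\gamma=\alpha=0$ we get $P_1=P_2=0$. Then $R_5=0$, which gives $L_5=0$, and also $-R_2=\beta^2+\gamma^2=0$, which gives $L_2=0$. The factor $L_2$ contains the bracket $Z_uW_v(Z_s-Z_r)+Z_rW_s(Z_u-Z_v)$. The vanishing of $L_5$ forces one of $\sin x_l$ or $\sin(x_u-x_r)$, $\sin(x_v-x_s)$ to vanish. We would then check case by case that this, combined with $\beta=0$, makes $x=y=z=0$ in Proposition \ref{NRSNUVCM}, hence makes the matrices commute, a contradiction. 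This is the main obstacle. The cleanest route is to pick the pairs $(r,s),(u,v)$ ourselves, with $m=1$ or $m\ge7$, so that $\sin(l\theta)\neq0$ and $\sin((u-r)\theta),\sin((v-s)\theta)\ne0$. This is possible since only residues modulo $m$ matter and $m\ge7$ leaves room. Then $R_5\neq0$ and the trivial solution is excluded outright. For $m=1$, $\theta\in\pi\mathbb{Z}$ gives $\sin\theta=0$ and the KZ equation degenerates. Those weights $k\equiv5\pmod6$, together with $k=-1$, must be handled by the direct logarithmic argument at the cusp, where the indicial roots coincide or differ by integers.

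Finally, the weights with $m\in\{2,3,4,5,6\}$ not listed in (1) and (2) are the exceptional ones where $\Gamma(N)$-irreducibility can fail. For these we would use the finite list of possible images, namely the $\Gamma(5)$ case $m=5$ with $n\equiv4$, and $m=2,3,4,6$. For each we exhibit directly, via the local exponents and a second non-commuting pair, that no common eigenvector exists. That completes the classification into dimensions $1$, $2$ and $0$.
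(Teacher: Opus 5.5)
Your non-existence argument follows the paper's chain (Theorem~\ref{LRSLUVIGNANDNCNRSNUV}, Theorem~\ref{NRSNUVTS}, Theorems~\ref{NRSNUVTRIG2} and~\ref{NTSA2PB2S2EQG214S2}) down to $\alpha=\beta=\gamma=0$, but the step that closes it is missing. The paper closes it with Lemma~\ref{AEBEGE0CNRSNUV}. From $P_1=P_2=0$ and $\beta=0$, it checks the three cases $\sin x_s=0$, $\sin x_v=0$ and $\sin(x_u-x_r)=0$. Each forces a scalar matrix, or $x=y=z=0$ in Proposition~\ref{NRSNUVCM}, or $N(r,s)=N(u,v)$, which contradicts non-commutativity.

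You only announce this check and then replace it by choosing the pairs yourself so that $R_5\neq 0$. That substitute cuts the link with Theorem~\ref{LRSLUVIGNANDNCNRSNUV}: pairs chosen to make $R_5\neq0$ are not known to be non-scalar, so conditions (1)--(2) of Theorem~\ref{NRSNUVTS} come back into play. This is not a formality. Since $Z_l=e^{2\pi i l n/m}$, the matrix $N(r,s)$ depends only on $(r,s)\bmod m$. If $m\mid N$, you are forced into $rs\equiv 2\pmod m$. The obvious residues $(r,s)\equiv(1,2),(2,1),(-1,-2),(-2,-1)$ all give $K_{r,s}=0$; for instance $K_{1,2}=0$ identically, reflecting that $TST^2S$ has trace $0$, so $(TST^2S)^2=-E$. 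In particular, your own example $r=2+N$, $s=1+N$ gives a scalar matrix whenever $m\mid N$. So you would need a separate argument that $\Gamma(N)$ contains non-scalar pairs with $R_5\neq0$, which you do not give. The simple fix is to carry out the short case analysis of Lemma~\ref{AEBEGE0CNRSNUV}.

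Several peripheral claims are also wrong.
\begin{enumerate}
\item For $k\equiv 0,4\pmod 6$ there is no logarithm at $i\infty$. The exponents there are $0$ and $(k+1)/6$, which differ by an integer only when $m=1$. The one-dimensionality in case (1) comes from the global monodromy being reducible (one of $a,b,c-a,c-b$ is an integer), not from the cusp, and the paper takes it from Kaneko--Koike and Guerzhoy.
\item For $m=1$, the logarithm only rules out the logarithmic solution. The log-free solution at the cusp is an eigenvector of $\rho(T)$ and still needs Theorem~2 of Kaneko--Koike, as the paper uses.
\item Your final paragraph concerns an empty set. The values $m\in\{2,3,4,5\}$ are exactly case (2), $m=6$ is exactly case (1), and $k=(6n+1)/5$ with $n\equiv4\pmod5$ is an integer $\equiv5\pmod6$, i.e.\ $m=1$.
\item Condition (3) of Theorem~\ref{NRSNUVTS} falls under case (1), not case (2).
\item Your reduction from an arbitrary finite-index subgroup to some $\Gamma(N)$ is unsupported in the non-congruence case; the paper's argument likewise only covers $\Gamma(N)$.
\end{enumerate}
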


\begin{remark}
  When $k=-1$, the KZ equation is $f''(\tau)=0$. Therefore, its general solution is given
  by $c_1+c_2\tau$ for $c_1, c_2\in\mathbb{C}$. It is clear that when $c_2=0$, a modular
  form (or modular function) of weight $0$ is obtained as a solution, and when $c_2\ne 0$,
  the solution is not a modular form of any weight. Note that the case where the weight
  of the differential equation and the weight of the solution differ is limited to $k=-1$
  (see Corollary 2.2 of \cite{ST}).
\end{remark}

\begin{remark}
  When $k\equiv 0, 4\pmod{6}$, the modular form solution to the KZ equation is a modular
  form of weight $k$ with respect to $SL_2(\mathbb{Z})$ (Theorem 1 of \cite{KK}).
  Furthermore, it is shown in Theorem 1 of \cite{G} that the remaining one-dimensional
  solution can be explicitly constructed as a mixed-mock modular form of weight $k$ on
  $SL_2(\mathbb{Z})$.
\end{remark}

\begin{remark}
  When $k \equiv 1, 2, 3 \pmod{6}$, the KZ equation admits a basis of the solution space
  consisting of modular forms on $\Gamma(3), \Gamma(2)$, and $\Gamma(3)$, respectively.
  It admits such a basis on $\Gamma(4)$ when $k \equiv 1/2 \pmod{3}$, and on $\Gamma(5)$
  when $k=(6n+1)/5\ (n\not\equiv 4\pmod{5})$ (Theorem 1 of \cite{KK} or \cite{K}).
  When $k \equiv 5\pmod{6}$, the KZ equation admits no modular forms as solutions; more
  precisely, its solution space has a basis expressed as a linear combination of a
  quasi-modular form, a quasi-modular form of a different weight, and a modular form
  (Theorem 2 of \cite{KK}).
\end{remark}

\begin{remark}\label{REMMAINTHEOREM}
  From the remarks above, what we actually need to prove is case (3) of Theorem
  \ref{MAINTHEOREM}, and the case $k\equiv 5\pmod{6}$ can also be excluded. Moreover,
  for $k$ other than these, it has been shown by Saber and Sebbar \cite{SS2} or in our
  previous work \cite{ST} that the basis of the solution space of the KZ equation does
  not consist solely of modular forms of weight $k$ on a congruence subgroup. Therefore,
  what remains to be shown is that, excluding case (1) of Theorem \ref{MAINTHEOREM},
  if the KZ equation does not admit a two-dimensional space of modular form solutions
  of weight $k$, then it admits no modular form solutions of weight $k$ at all.
\end{remark} 
  
To prove Theorem \ref{MAINTHEOREM}, we prepare a few lemmas.

\begin{lemma}\label{AEBEGE0CNRSNUV}
  If $\alpha=\beta=\gamma=0$, then $N(r,s)$ and $N(u,v)$ commute, where $\alpha, \beta$,
  and $\gamma$ are real numbers defined by
  \begin{align*}
    \alpha &= \frac{1}{2} \Big( \sin(x_s)\sin(x_v)\sin(x_u-x_r) + \sin(x_r)\sin(x_u)\sin(x_v-x_s) \Big), \\
    \beta &= \sin(x_v)\sin(x_s-x_r)\cos(x_u) - \sin(x_s)\sin(x_v-x_u)\cos(x_r), \\
    \gamma &= \sin(x_s)\sin(x_v)\sin(x_u-x_r) - \sin(x_r)\sin(x_u)\sin(x_v-x_s).
  \end{align*}
\end{lemma}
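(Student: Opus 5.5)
The plan is to show that $\alpha=\beta=\gamma=0$ forces every entry $x,y,z$ of the commutator in Proposition \ref{NRSNUVCM} to vanish. Write everything trigonometrically as in the proof of Proposition \ref{NRSNUVTRIG}, i.e.
\[
Z_l=e^{2ix_l},\qquad W_l=\frac{2i\sin(x_l)}{\sin\theta}\,e^{ix_l},\qquad C=\frac{1}{\sin\theta}.
\]

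\textbf{Step 1: the entry $x$.} I would start from the quantities $L$ and $M$ of Proposition \ref{NRSNUVDET}. After factoring out $e^{i(x_r+x_s+x_u+x_v)}$, the bracket $L=Z_uW_v(Z_s-Z_r)+Z_rW_s(Z_u-Z_v)$ becomes, up to a constant multiple of $1/\sin\theta$, a combination of the terms
\[
\sin(x_v)\sin(x_s-x_r)\,e^{i(x_u-x_r)}\quad\text{and}\quad \sin(x_s)\sin(x_v-x_u)\,e^{i(x_r-x_u)}.
\]
That is, $L$ is built from the quantities $A$ and $B$ of the proof of Theorem \ref{NRSNUVTRIG2}. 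The identities established there show that its real and imaginary parts are, up to the rotation by $e^{ix_u}$, $e^{ix_r}$, exactly the components of $\vec v_1-\vec v_2$, namely $\beta$ and $\gamma$. Hence $\beta=\gamma=0$ should give $L=0$. Similarly, $M=W_sW_v(Z_u-Z_r)$ is, up to a nonzero unit and power of $\sin\theta$, the product $\sin(x_s)\sin(x_v)\sin(x_u-x_r)=P_1$. Since $\alpha=\gamma=0$ gives $P_1=P_2=0$, we get $M=0$. Consequently $P_x=2L-M=0$, so $x=0$.

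\textbf{Step 2: the entries $z$ and $y$.} For $z$, note that $P_z=-L+\tfrac14 CM$, which vanishes by Step 1. For $y$, expand $P_y$ using $Z_l+1+\tfrac12W_l=Z_l+1+\tfrac C2(Z_l-1)$. Its first two terms then split into $-2L$-type terms plus terms proportional to $W_s(Z_v-Z_u)(Z_r-1)$ and $W_v(Z_r-Z_s)(Z_u-1)$, that is, into $(1+\tfrac C2)$ times a combination. I would rewrite this combination as a multiple of $L$ plus a multiple of $M$, plus a remainder of the form
\[
W_rW_v\,(\cdots)-W_sW_u\,(\cdots),
\]
which matches the factor $(W_rW_v-W_sW_u)$ appearing in $L_2$. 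Trigonometrically, $W_rW_v-W_sW_u$ is a multiple of $\sin x_r\sin x_v e^{i(x_r+x_v)}-\sin x_s\sin x_u e^{i(x_s+x_u)}$. I would then try to show that the remainder is a linear combination of $\alpha,\beta,\gamma$, most likely through the Ptolemy relations \eqref{EQPTOLEMY1} and \eqref{EQPTOLEMY2}.

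\textbf{Main obstacle.} The entry $y$ is where I expect the difficulty: reducing $P_y$ to the span of $L$ and $M$ is the step I am least sure of. If a direct reduction fails, the fallback is structural. From $x=z=0$ the commutator is $\begin{pmatrix}0&y\\0&0\end{pmatrix}$, so $N(r,s)$ and $N(u,v)$ share the eigenvector $e_1$ (when $G_{21}\neq0$). I would compare the $(1,2)$ entries in the two triangular forms: the vanishing $P_1=P_2=0$ forces $\sin x_s\sin x_v\sin(x_u-x_r)=0$ and $\sin x_r\sin x_u\sin(x_v-x_s)=0$. A short case analysis on which sine vanishes should then settle it. For instance, $\sin x_s=0$ gives $W_s=0$; then $\beta=0$ forces either $\sin x_v\sin x_r=0$ or a relation making the $N$'s scalar or proportional. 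In each case one checks directly that $P_y=0$.
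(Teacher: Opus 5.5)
Your route is genuinely different from the paper's and it works. As written, though, it stops exactly at the entry $y$, and the reduction you sketch there has two slips.

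The paper never expresses the commutator entries through $\alpha,\beta,\gamma$. It splits $P_1=0$ into the cases $\sin x_s=0$, $\sin x_v=0$, $\sin(x_u-x_r)=0$, and uses $\beta=0$, $P_2=0$ to reach a few degenerate configurations:
\begin{itemize}
\item[(i)] $Z_r=Z_s=1$ or $Z_u=Z_v=1$, so one matrix is scalar;
\item[(ii)] $Z_s=Z_v=1$ or $Z_r=Z_u=1$, where the commutator of Proposition \ref{NRSNUVCM} vanishes;
\item[(iii)] $Z_u=Z_r$ and $Z_v=Z_s$, so $N(r,s)=N(u,v)$.
\end{itemize}

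Your Steps 1 and 2 are correct. With $\phi=x_r+x_s+x_u+x_v$ and $A,B$ as in the proof of Theorem \ref{NRSNUVTRIG2}, one has $L=-4Ce^{i\phi}(Ae^{ix_u}-Be^{ix_r})=-4Ce^{i\phi}(\beta+i\gamma)$ by Lemma \ref{GESVSXSXRSXUMSXSSXVMXUSXR}, and $M=-8iC^2e^{i\phi}P_1$. A small correction: after removing $e^{i\phi}$, the phases are $e^{ix_u}$ and $e^{ix_r}$, not $e^{\pm i(x_u-x_r)}$.

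For $y$, the two slips are these. First, $Z_l+1+\frac12W_l=2Z_l+(\frac C2-1)(Z_l-1)$, so the coefficient is $\frac C2-1$, not $1+\frac C2$. Second, the remainder is not built from $W_rW_v-W_sW_u$. Precisely,
\[
P_y=-2L+M+\Bigl(\frac12-\frac1C\Bigr)R,\qquad R=W_rW_s(Z_v-Z_u)+W_uW_v(Z_r-Z_s)=8iC^2e^{i\phi}\gamma ,
\]
where the last equality again uses Lemma \ref{GESVSXSXRSXUMSXSSXVMXUSXR}. Since $P_1-\frac12\gamma=\alpha$, this gives $P_y=8Ce^{i\phi}(\beta-iC\alpha)$. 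Hence $y=0$ with no case analysis. Your fallback is only a sketch of the paper's argument, and its remark about the eigenvector $e_1$ and $G_{21}\neq0$ is not needed, so it can be dropped.

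With this filled in, your proof is shorter and uniform. Because $P_1=\alpha+\frac12\gamma$, all three quantities
\[
P_x=-8Ce^{i\phi}(\beta+i\gamma-iCP_1),\qquad P_y,\qquad P_z=4Ce^{i\phi}\bigl(\beta+i\gamma-\tfrac i2C^2P_1\bigr)
\]
are linear in $(\alpha,\beta,\gamma)$, so the lemma is immediate. Since $\alpha,\beta,\gamma,C$ are real, your route also gives the converse. When $C\neq\pm2$ (so $G_{12}G_{21}\neq0$) and $K_{r,s}K_{u,v}\neq0$, $P_y=0$ forces $\alpha=\beta=0$, and then $P_x=0$ forces $\gamma=0$. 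The paper's case split, in exchange, identifies explicitly which degenerate configuration of $Z_r,Z_s,Z_u,Z_v$ produces the commutation.
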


\begin{proof}
  Let $P_1 = \sin(x_s)\sin(x_v)\sin(x_u-x_r)$ and $P_2 = \sin(x_r)\sin(x_u)\sin(x_v-x_s)$.
  Recall from the proof of Theorem \ref{NRSNUVTRIG2} that $\alpha = \frac{1}{2}(P_1+P_2)$
  and $\gamma = P_1-P_2$. Thus, $\alpha=0$ and $\gamma=0$ is equivalent to $P_1=0$ and
  $P_2=0$. Below, we consider three cases for $P_1=0$.

  When $\sin(x_s) = 0$, we have $\beta=\pm\sin(x_v)\sin(x_r)\cos(x_u)=0$ and
  $P_2=\pm\sin(x_r)\sin(x_u)\sin(x_v)=0$, which implies $\sin(x_r)=0$ or $\sin(x_v)=0$.
  Thus, $Z_s=Z_r=1$ or $Z_s=Z_v=1$. In the former case, by Theorem \ref{THEXFN}, $N(r, s)$
  is a scalar multiple of the identity matrix and commutes with any matrix. In the latter
  case, it reduces to $[N(r, s), N(u, v)]=0$ by Proposition \ref{NRSNUVCM}.

  Similarly, when $\sin(x_v) = 0$, we have $\sin(x_u)=0$ or $\sin(x_s)=0$. The former case
  implies that $N(u, v)$ is a scalar multiple of the identity matrix, and the latter
  reduces to $[N(r, s), N(u, v)]=0$.
  
  Finally, when $\sin(x_u-x_r) = 0$, we have $x_u = x_r + n\pi$ ($n \in \mathbb{Z}$),
  which yields
  \begin{align*}
    \beta &= (-1)^n \cos(x_r) \Big( \sin(x_v)\sin(x_s-x_r) - \sin(x_s)\sin(x_v-x_r) \Big) \\
    &= (-1)^{n+1} \cos(x_r)\sin(x_r)\sin(x_v-x_s) = 0.
  \end{align*}
  Combined with $P_2=0$, we obtain $\sin(x_r)=0$ or $\sin(x_v-x_s)=0$.
  The former reduces to $[N(r, s), N(u, v)]=0$, and the latter reduces to $N(r, s)=N(u, v)$.
  In all cases, $N(r, s)$ and $N(u, v)$ commute.
\end{proof}

\begin{proposition}\label{PROPNONSIMTRI}
  Assume that $k$ is a rational number such that $k \not\equiv 0, 4, 5, 6, 10, 11\pmod{12}$,
  and that the two matrices $N(r,s)$ and $N(u,v)$ do not commute.
  Then, $N(r,s)$ and $N(u,v)$ are not simultaneously triangularizable.
\end{proposition}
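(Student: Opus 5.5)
We argue by contradiction. Suppose $N(r,s)$ and $N(u,v)$ do not commute but are simultaneously triangularizable, and go through the four alternatives of Theorem \ref{NRSNUVTS}.

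\emph{Excluding (1)--(3).} Alternatives (1) and (2) would make one of the matrices scalar. A scalar matrix commutes with everything, contradicting non-commutativity. Alternative (3), $C=-2$, happens only for $k\equiv 6,10\pmod{12}$, which is excluded by hypothesis. The other exclusions $k\not\equiv 0,4,5,11\pmod{12}$ guarantee that Theorem \ref{NRSNUVTS} applies and that $C\neq\pm2$. Hence condition (4), $L_5C^2=\frac12L_2$, must hold. Since $\sin\theta\neq0$ (there is no $k$ with $C=0$, i.e.\ $1/C$ is finite), Proposition \ref{NRSNUVTRIG} and Theorem \ref{NRSNUVTRIG2} turn this into the perfect square equation
\[
\alpha^2+\beta^2\sin^2\theta=\gamma^2\Bigl(\tfrac14-\sin^2\theta\Bigr),
\]
with $\alpha,\beta,\gamma$ real.

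\emph{Applying Theorem \ref{NTSA2PB2S2EQG214S2}.} Because $k\in\mathbb{Q}$, the angle $\theta=\frac{1+k}{6}\pi$ is a rational multiple of $\pi$, so the theorem applies. It says a solution exists only if $\alpha=\beta=\gamma=0$, or $\alpha=\beta=0$ together with $\sin^2\theta=\frac14$. Note that $\sin^2\theta=\frac14$ means $C=\pm2$, which is already excluded. This leaves $\alpha=\beta=\gamma=0$. By Lemma \ref{AEBEGE0CNRSNUV}, this forces $N(r,s)$ and $N(u,v)$ to commute, contradicting the hypothesis. Therefore the matrices are not simultaneously triangularizable.

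\emph{The main obstacle.} The delicate point is checking that every case is genuinely covered. In particular, Theorem \ref{NTSA2PB2S2EQG214S2} explicitly assumes $0<\sin^2\theta<\frac14$, while Lemma \ref{NRSNUVTRIEQNOSOLLEM} handles $\sin^2\theta>\frac14$; the value $\frac14$ itself is excluded since it gives $C=\pm2$. So I would split explicitly into the two regimes $\sin^2\theta>\frac14$ and $\sin^2\theta<\frac14$. In both, the outcome is $\alpha=\beta=\gamma=0$, after which Lemma \ref{AEBEGE0CNRSNUV} finishes the argument. I would also double-check that the equivalences in Proposition \ref{NRSNUVTRIG} involve only division by the nonzero quantities $64\Phi^2$ and $\sin^2\theta$, so that condition (4) and the perfect square equation really are equivalent for the $k$ under consideration.
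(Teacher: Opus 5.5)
Your proof is correct and is essentially the paper's argument. You assume the matrices do not commute but are simultaneously triangularizable, land in condition (4) via Theorem~\ref{NRSNUVTS}, convert it through Proposition~\ref{NRSNUVTRIG} and Theorem~\ref{NRSNUVTRIG2} to the perfect-square equation, get $\alpha=\beta=\gamma=0$ from Theorem~\ref{NTSA2PB2S2EQG214S2} (with Lemma~\ref{NRSNUVTRIEQNOSOLLEM} for $\sin^2\theta>\frac14$), and finish with Lemma~\ref{AEBEGE0CNRSNUV}; you only spell out the exclusion of alternatives (1)--(3), which the paper leaves implicit.

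One small correction: $\sin\theta\neq 0$ follows from the hypothesis $k\not\equiv 5,11\pmod{12}$ (equivalently, $C=\csc\theta$ is finite), not from the fact that $C$ never vanishes.
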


\begin{proof}
  We proceed by contradiction. Suppose that $N(r,s)$ and $N(u,v)$ do not commute but are
  simultaneously triangularizable.

  We have $\theta = \frac{k+1}{6}\pi$, and $\sin^2\theta = 1/4$ occurs only when
  $k \equiv 0, 4, 6, 10 \pmod{12}$. By assumption, $k \not\equiv 0, 4, 6, 10 \pmod{12}$,
  so $\sin^2\theta \neq 1/4$.

  Since the matrices $N(r,s)$ and $N(u,v)$ are simultaneously triangularizable but do not
  commute, by Theorem \ref{NRSNUVTS}, Proposition \ref{NRSNUVTRIG}, and Theorem
  \ref{NRSNUVTRIG2}, this condition is equivalent to the perfect square equation
  \begin{equation*}
    \alpha^2 + \beta^2 \sin^2\theta = \gamma^2 \left( \frac{1}{4} - \sin^2\theta \right).
  \end{equation*}

  Since $k$ is a rational number and $\sin^2\theta \neq 1/4$, Theorem
  \ref{NTSA2PB2S2EQG214S2} dictates that the only real solutions to this equation are the
  trivial ones, $\alpha = \beta = \gamma = 0$.
  
  However, by Lemma \ref{AEBEGE0CNRSNUV}, if $\alpha = \beta = \gamma = 0$, the matrices
  commute, which contradicts our initial assumption that $N(r,s)$ and $N(u,v)$ do not
  commute.
\end{proof}

Using the results obtained thus far, the main theorem \ref{MAINTHEOREM} can be easily
proved.

\begin{proof}[Proof of Theorem \ref{MAINTHEOREM}]
  By Remark \ref{REMMAINTHEOREM}, cases (1) and (2) of Theorem \ref{MAINTHEOREM}, as well
  as the case $k\equiv 5\pmod{6}$, can be excluded. Under these exclusions, Theorem
  \ref{LRSLUVIGNANDNCNRSNUV} guarantees the existence of distinct pairs of integers
  $(r, s)$ and $(u, v)$ such that $L(r, s), L(u, v)\in\Gamma(N)$ and the matrices $N(r, s)$
  and $N(u, v)$ do not commute.
  
  Consequently, by Proposition \ref{PROPNONSIMTRI}, for these pairs of integers $(r, s)$
  and $(u, v)$, the representation matrices $N(r, s)$ and $N(u, v)$ are not simultaneously
  triangularizable. This implies that the action of $L(r, s), L(u, v)\in\Gamma(N)$ on the
  basis $E_4(\tau)^{k/4}y_1(\tau)$ and $E_4(\tau)^{k/4}y_3(\tau)$ of the KZ equation is not
  reducible (i.e., it is irreducible). Therefore, in this case, the KZ equation admits no
  modular form solutions of weight $k$.
\end{proof}

Finally, we provide a corollary regarding the fact established by Theorem \ref{MAINTHEOREM}
and its relation to the Schwarzian differential equation corresponding to the KZ equation.

\begin{corollary}
  Let $n$ and $m$ be coprime positive integers, and suppose $m = 1$ or $m \ge 7$. Then,
  the two-dimensional representation $\rho|_{\Gamma(N)}$ of the principal congruence
  subgroup $\Gamma(N)$ associated with the solution $h(\tau)$ of the Schwarzian
  differential equation
  \begin{equation*}
    \{h, \tau\} = 2\pi^2\left(\frac{n}{m}\right)^2 E_4(\tau)
  \end{equation*}
  is irreducible.
\end{corollary}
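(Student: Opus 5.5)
We translate the hypothesis on $m$ into a condition on $k$ and then reduce to Proposition \ref{PROPNONSIMTRI}. Write $(k+1)/6 = n/m$ in lowest terms, so $\theta = \frac{n}{m}\pi$ and $k = 6n/m - 1$. The representation $\rho$ attached to $h=y_1/y_2$ is the projectivization of the monodromy of the normal form $y''+\pi^2(n/m)^2E_4y=0$. This normal form is obtained from the KZ equation by multiplying solutions by a fixed nowhere-vanishing factor, namely a power of $\eta$. Hence $\rho|_{\Gamma(N)}$ agrees, up to a character twist, with the representation of $\Gamma(N)$ on the span of $E_4^{k/4}y_1, E_4^{k/4}y_3$, whose matrices for $L(r,s)$ are the $N(r,s)$ of Theorem \ref{THEXFN} after a diagonal change of basis. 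A character twist does not affect irreducibility or simultaneous triangularizability. So it suffices to find $L(r,s),L(u,v)\in\Gamma(N)$ for which $N(r,s)$ and $N(u,v)$ share no common eigenvector. For a $2$-dimensional representation, reducibility is equivalent to a common eigenvector for all images, so this gives irreducibility.

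Next we check the hypotheses. If $m=1$, then $k=6n-1\equiv 5\pmod 6$. If $m\ge 7$, then $k\notin\mathbb{Z}$, and $\sin^2\theta\ne 1/4$ since $\sin^2(n\pi/m)=1/4$ forces $m\mid 6$. Also $C\ne\pm 2$, so $k\not\equiv 0,4,6,10\pmod{12}$. Moreover $k\not\equiv 1/2,7/2\pmod 6$, because those values give $m=4$. For $k=m'/5$ we have $(k+1)/6=(m'+5)/30$, and $m'\equiv 1\pmod 6$ corresponds to denominator $5$ in lowest terms, which is also excluded. Thus for $m\ge 7$ all hypotheses of Theorem \ref{LRSLUVIGNANDNCNRSNUV} and Proposition \ref{PROPNONSIMTRI} hold. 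Theorem \ref{LRSLUVIGNANDNCNRSNUV} supplies non-commuting $N(r,s),N(u,v)$ with $L(r,s),L(u,v)\in\Gamma(N)$, and Proposition \ref{PROPNONSIMTRI} then says they are not simultaneously triangularizable.

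The case $m=1$ is the main obstacle. Here $k\equiv 5\pmod 6$, so $k\equiv 5$ or $11\pmod{12}$, which the standing hypotheses of Section \ref{PR} exclude. In this case $\theta=n\pi$, so $\sin\theta=0$ and $Z_l=1$ for all $l$, and the matrices $N(r,s)$ degenerate. The hypergeometric basis is also logarithmic here, since $c$ is an integer.

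For $m=1$ I would argue directly instead. The equation becomes $y''+\pi^2n^2E_4y=0$, whose local exponents at the cusp differ by the integer $n$. The monodromy of $T$ is therefore unipotent, and one must show it is nontrivial. The nontriviality comes from the logarithmic term, matching Theorem 2 of \cite{KK}: there the solution space is spanned by quasi-modular combinations, so $T^N$ acts by a nontrivial unipotent matrix, and the unique $T^N$-invariant line is spanned by the genuine modular solution. I would then show that some conjugate element of $\Gamma(N)$, for instance $ST^NS^{-1}$, moves this line. Its invariant line corresponds to the cusp $0$, and the modular solution attached to $i\infty$ is not also an eigenvector there, because $\rho$ is irreducible on $SL_2(\mathbb{Z})$ by \cite{M}. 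Two distinct unipotent elements with different fixed lines have no common eigenvector, which gives irreducibility in this case too.
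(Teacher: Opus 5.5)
Your case $m\ge 7$ follows the paper's argument. Theorem \ref{LRSLUVIGNANDNCNRSNUV} supplies non-commuting $N(r,s),N(u,v)$ with $L(r,s),L(u,v)\in\Gamma(N)$, and Proposition \ref{PROPNONSIMTRI} rules out a common eigenvector. The only difference is bookkeeping. You check the hypotheses of both results directly from $m$, and you make the $\eta$-twist between the Schwarzian's $\rho$ and the KZ monodromy explicit. The paper instead gets the hypotheses by noting that $m\ge 7$ avoids cases (1) and (2) of Theorem \ref{MAINTHEOREM}.

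For $m=1$ your route is genuinely different. The paper argues in one line: by Kaneko--Koike the KZ equation has no modular-form solution when $k\equiv 5\pmod 6$, while a $\rho(\Gamma(N))$-stable line would be spanned by a modular-form solution on $\Gamma(N)$. You use only level-one information. The integral exponent difference forces $\rho(T)$ to have a single eigenvalue. So once $\rho(T)$ is non-scalar, $\rho(T^N)$ and $\rho(ST^NS^{-1})=\rho(S)\rho(T^N)\rho(S)^{-1}$ each have a unique eigenline, and these two lines differ as soon as $\rho$ is irreducible on $SL_2(\mathbb{Z})$. This exhibits the actual mechanism, two cusp unipotents with different fixed lines. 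It also needs non-modularity only for the full modular group and adapts immediately to any finite-index subgroup. The paper's version is shorter but rests entirely on the citation.

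Two points in your $m=1$ paragraph need repair.

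\emph{The $T^N$-fixed line is not spanned by a genuine modular solution.} Take $k=5$ and write $D=q\,d/dq$. The solution space is spanned by $DE_4=(E_2E_4-E_6)/3$ and $\tau\, DE_4+\frac{2}{\pi i}E_4$. Here $\rho$ is the standard representation up to transposition, and the fixed line is that of the quasi-modular form $DE_4$. Had that line been spanned by a modular form for $SL_2(\mathbb{Z})$ (respectively for $\Gamma(N)$), it would be stable under $\rho(S)$ (respectively $\rho(ST^NS^{-1})$), and your argument would collapse. All you actually use is that it is the eigenline of $\rho(T)$.

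\emph{Citing \cite{M} does not show that this particular $\rho$ is irreducible.} That classification presupposes irreducibility and a $T$-unitarizable $\rho(T)$, whereas here $\rho(T)$ is a nontrivial Jordan block. Prove it directly instead:
\begin{itemize}
\item An $SL_2(\mathbb{Z})$-stable line would be spanned by a KZ solution $f$ with $f|_k\gamma=\chi(\gamma)f$ for some character $\chi$.
\item Since the exponents at $i\infty$ are $0$ and $n$, we get $\chi(T)=1$.
\item The abelianization of $SL_2(\mathbb{Z})$ is $\mathbb{Z}/12\mathbb{Z}$, generated by the image of $T$, so $\chi$ is trivial.
\item This contradicts $f|_k(-I)=-f$ for the odd weight $k=6n-1$.
\end{itemize}
The same observation gives non-scalarity for free: $-I$ lies in the normal closure of $T$ and acts by $-1$, so $\rho(T)\neq I$. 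Your appeal to the logarithmic term is therefore unnecessary.
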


\begin{proof}
  According to Saber and Sebbar \cite{SS2}, the solution $h$ of the Schwarzian
  differential equation $\{h, \tau\} = 2\pi^2(n/m)^2 E_4(\tau)$ is a $\rho$-equivariant
  function with respect to a suitable two-dimensional representation $\rho$ of the modular
  group (see \cite{SS1}). This equation is precisely the one satisfied by the ratio
  $h(\tau) = y_1(\tau)/y_3(\tau)$ of the independent solutions derived from the KZ
  equation of weight $k = \frac{6n}{m}-1$ by setting the parameter
  $\frac{k+1}{6} = \frac{n}{m}$.

  Classifying the condition on the weight $k$ by the denominator $m$ of the irreducible
  fraction yields the following:
  \begin{itemize}
    \item When $m=6$: Since $\gcd(n, 6)=1$, we have $n \equiv 1, 5 \pmod 6$, which implies
          $k \equiv 0, 4 \pmod 6$. This corresponds to case (1) of Theorem
          \ref{MAINTHEOREM}.
    \item When $m \in \{2, 3, 4, 5\}$: These correspond to case (2) of Theorem
          \ref{MAINTHEOREM}, respectively.
  \end{itemize}
  Therefore, the assumption ``$m = 1$ or $m \ge 7$'' is equivalent to not falling into
  cases (1) and (2) of Theorem \ref{MAINTHEOREM}, which means the original KZ equation
  admits no modular form solutions.

  When restricting the group to the principal congruence subgroup $\Gamma(N)$, if we
  assume that the representation $\rho\vert{}_{\Gamma(N)}$ associated with the equivariant
  function $h$ is reducible---meaning it has a one-dimensional invariant subspace (a
  common eigenvector)---then the original KZ equation must admit a modular form solution
  with respect to $\Gamma(N)$.
  
  For the case $m \ge 7$, as shown in the proof of Theorem \ref{MAINTHEOREM}, the 
  representation matrices of elements belonging to $\Gamma(N)$ are not simultaneously 
  triangularizable. In a two-dimensional linear representation, not being simultaneously 
  triangularizable means they do not share a one-dimensional invariant subspace. 
  Consequently, the representation $\rho\vert{}_{\Gamma(N)}$ is irreducible.
  On the other hand, for the case $m = 1$ (which corresponds to $k \equiv 5 \pmod 6$), 
  the matrix formulas are undefined since $\sin\theta = 0$. However, the non-existence 
  of a basis of modular form solutions is already completely established by the fact 
  that the solution space inherently involves quasi-modular forms (as shown by Kaneko 
  and Koike). This directly implies that the representation $\rho\vert{}_{\Gamma(N)}$ 
  is irreducible in this case as well, entirely independent of the connection matrix 
  argument.
\end{proof}

\section*{Acknowledgements}
The authors would like to express their deepest gratitude to Professor Abdellah Sebbar
and his assistant for their careful reading of the manuscript and for providing detailed
and constructive comments. In particular, we are immensely grateful for his crucial
suggestion of utilizing the norm argument, which fundamentally refined and completed
the proof of Theorem 4.5. Their contributions have significantly improved the clarity
and accuracy of this paper.

\bibliographystyle{plain}
\bibliography{kzssa}
\nocite{*}

\end{document}